\theoremstyle{plain}
\newtheorem{thm}{Theorem}[section] 
\newtheorem*{thm*}{Theorem}
\newtheorem*{mainthm}{Main Theorem}
\newtheorem{prop}[thm]{Proposition}
\newtheorem{cor}[thm]{Corollary}
\newtheorem{lem}[thm]{Lemma}
\newtheorem{letlem}{Lemma}
\theoremstyle{definition}
\newtheorem{exa}[thm]{Example}
\newtheorem{rem}[thm]{Remark}
\newcommand*{\myproofname}{Proof of Theorem \ref{1thm:inertness}}
\newenvironment{thmproof}[1][\myproofname]{\begin{proof}[#1]}{\end{proof}}
\newcommand*{\myproofnames}{Proof of Proposition \ref{prop:pdhmlgy}}
\newenvironment{propproof}[1][\myproofnames]{\begin{proof}[#1]}{\end{proof}}
\newcommand{\N}{\mathbb{N}}
\newcommand{\Z}{\mathbb{Z}}
\newcommand{\Q}{\mathbb{Q}}
\newcommand{\C}{\mathbb{C}}
\renewcommand{\mathbb}{\varmathbb}
\title{A Homotopy Theoretic Analogue to a Theorem of Wall}
\author{Sebastian Chenery}
\address{Mathematical Sciences, University of Southampton, Southampton SO17 1BJ, United Kingdom}
\email{s.d.chenery@soton.ac.uk}
\subjclass[2020]{Primary 57P10; Secondary 55P35}
\keywords{Poincar\'e duality, connected sums, loop spaces}
\begin{document}

\maketitle

\begin{abstract}
It is a well-known result of C.T.C. Wall's that one may decompose a simply connected 6-manifold as a connected sum of two simpler manifolds. Recent work of Beben and Theriault on decomposing based loop spaces of highly connected Poincar\'e Duality complexes has yielded new methods for analysing the homotopy theory of manifolds. In this paper we will expand upon these methods, which we will then apply to prove a higher dimensional homotopy theoretic analogue to Wall’s Theorem.


\end{abstract}

\section{Introduction}

When studying the algebraic topology of manifolds, it is natural to begin with those manifolds which are highly connected. Indeed, such study has a rich history: Milnor recounts in \cite{milnorclass} that during the 1950s he was concerned with $(n-1)$-connected $2n$-manifolds, and Ishimoto classified $\pi$-manifolds of this type in \cite{ishimoto69}. Another active author in this area was C.T.C. Wall, who in 
\cite{wall6mfld} sought a classification of simply connected 6-manifolds. Using methods of differential topology and surgery theory, Wall gave the following theorem.

\begin{thm*}[Wall]
Let $M$ be a closed, smooth, simply connected 6-manifold. Then there is a diffeomorphism \[M\cong M_1\#M_2\] where $M_1$ is a connected sum of finitely many copies of $S^3\times S^3$ and $H_3(M_2)$ is finite.
\end{thm*}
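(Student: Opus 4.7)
The plan is to peel off copies of $S^3\times S^3$ from $M$ one summand at a time until only a manifold with finite third homology remains, by realising a symplectic basis of $H_3(M;\Z)$ geometrically. First I would observe that simple connectivity forces $H_1(M)=H_5(M)=0$, and a combination of the universal coefficient theorem with Poincar\'e duality yields $H_3(M;\Z)\cong\Z^{2k}\oplus T$ with $T$ finite. The free rank is automatically even because the intersection pairing on $H_3$ of a closed oriented $6$-manifold is skew-symmetric and non-degenerate on the free quotient, equipping it with the structure of a symplectic lattice.

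The heart of the argument is then to realise a symplectic basis $\{e_1,f_1,\dots,e_k,f_k\}$ geometrically, by pairs of smoothly embedded $3$-spheres $E_i,F_i\subset M$ such that $E_i\pitchfork F_i$ in exactly one point while all other intersection numbers among the $2k$ spheres vanish. For this I would invoke Haefliger--Zeeman type embedding theorems to represent each class by an embedded sphere (dimension $6$ sits at the boundary of the metastable range for $3$-spheres), then use simple connectivity of $M$ to run Whitney's trick, cancelling the algebraically-zero pairs of unwanted intersection points between distinct spheres.

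Granting the geometric realisation, the final step is to show that a closed tubular neighbourhood $N_i$ of $E_i\cup F_i$ is diffeomorphic to $S^3\times S^3$ with an open $6$-disc removed; indeed, the complement of the two coordinate factors $S^3\times\{\ast\}$ and $\{\ast\}\times S^3$ in $S^3\times S^3$ is diffeomorphic to $\R^6$. Excising the interior of $N_i$ from $M$ and capping off the resulting $S^5$ boundary with a $6$-disc produces a closed, simply connected $6$-manifold $M'$ with $M\cong (S^3\times S^3)\mathbin{\#}M'$, and $H_3(M')$ has the same torsion subgroup $T$ but free rank $2k-2$. Iterating $k$ times yields $M\cong M_1\mathbin{\#}M_2$ with $M_1$ a connected sum of $k$ copies of $S^3\times S^3$ and $H_3(M_2)\cong T$ finite.

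The main obstacle will be the geometric realisation step: in dimension $6$ Whitney's trick is borderline, since cancelling pairs of $3$-dimensional double points requires Whitney $3$-discs whose interiors must themselves embed disjointly from the ambient sphere configuration, while self-intersections of each individual $3$-sphere encounter an obstruction in $\pi_2(M)$ that need not vanish under mere simple connectivity. This is precisely where Wall's original proof calls on the full strength of simply connected surgery theory in dimension $6$, and it is also the point at which any homotopy-theoretic analogue in higher dimensions would need a suitable substitute.
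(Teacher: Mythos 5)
The paper does not prove this statement --- it is quoted as a classical result of Wall, with a citation to \cite{wall6mfld}; the paper's own contribution is the homotopy-theoretic analogue in higher dimensions, not a new proof of the $6$-dimensional theorem. Your outline is nevertheless recognisably a sketch of Wall's surgery-theoretic strategy: compute $H_3(M)\cong\Z^{2k}\oplus T$ with even free rank from skew-symmetry and unimodularity of the intersection pairing, realise a symplectic basis by embedded $3$-spheres with the prescribed \emph{geometric} intersection pattern, identify a closed regular neighbourhood of each plumbed pair $E_i\cup F_i$ with $(S^3\times S^3)\setminus\mathrm{int}(D^6)$ (triviality of normal bundles is automatic since $\pi_2(SO(3))=0$), and cut along the resulting $S^5$ boundaries and cap to split off $S^3\times S^3$ summands.

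Your diagnosis of the hard step, however, has the dimensions wrong. Transverse $3$-spheres in a $6$-manifold meet in isolated points, not in $3$-dimensional strata, so the excess double points to be cancelled are $0$-dimensional; and the Whitney discs that pair them off are $2$-discs, not $3$-discs. With $\pi_1(M)=0$, a generic Whitney $2$-disc in a $6$-manifold is embedded with interior disjoint from the $3$-spheres (since $2+3<6$), so the basic Whitney move is available. The genuine subtlety at $n=3$ lies in the framing of the Whitney discs and in the self-intersection invariant of an individual immersed $3$-sphere in its homotopy class --- an invariant that in the simply connected, odd middle-dimensional setting is a finite quantity attached to the immersion, not an obstruction living in $\pi_2(M)$ as you suggest. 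Making this realisation step precise is exactly the content of Wall's argument; your sketch is pointed in the right direction but leaves it essentially untouched.
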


Generalising this theorem to higher dimensions leads one to consider decomposing $(n-2)$-connected $2n$-manifolds into constituent parts via the operation of connected sums. In the past decade there has been much activity studying highly connected manifolds via homotopy theory, notably by Beben and Theriault \cites{bt1, bt2}, in which they consider the based loop spaces of $(n-1)$-connected $2n$-manifolds. More recently, work of Huang \cite{huanggauge} incorporated a study of torsion free $(n-2)$-connected $2n$-manifolds with vanishing cohomology in dimension $n$. These papers do not explore connected sums directly, but instead give decompositions of loop spaces as products of other spaces. This leads us to the content of this paper: drawing on this recent work, and making use of known results for based loop spaces of certain complexes, we give the following homotopy theoretic analogue to Wall's Theorem, which we prove in Theorem \ref{thm:wallanalogue}. 

\begin{mainthm}
Let $n>3$ be an integer such that $n\not\in\lbrace4,8\rbrace$, and let $M$ be a \((n-2)\)-connected \(2n\)-dimensional Poincar\'e Duality complex with \(rank(H_n(M))=d>1\). Then there exists a homotopy equivalence \[\Omega M\simeq\Omega(M_1\#M_2\#M_3)\] where
\begin{itemize}
    \item[(i)] \(M_1\) is an \((n-1)\)-connected \(2n\)-dimensional Poincar\'e Duality complex, with \(rank(H_n(M_1))=d\);
    \item[(ii)] \(M_2\) is a connected sum of finitely many copies of \(S^{n-1}\times S^{n+1}\) and;
    \item[(iii)] $M_3$ is a \(CW\)-complex with $H_n(M_3)$ finite.
\end{itemize}
\end{mainthm}

This has result has several implications, notably that the homotopy groups of $M$ are determined by those of the connected sum $M_1\#M_2\#M_3$. More deeply, it also implies that in almost all cases, the homotopy groups of a $(n-2)$-connected $2n$-manifold are rationally hyperbolic (see Corollary \ref{cor:hyp}). Note however that we specifically exclude the case of a simply connected and \(6\)-dimensional Poincar\'e Duality complex - the techniques required for decomposing such complexes are very different to those discussed in this paper, see for example \cites{cutlerso, huang6over4}. Furthermore, it also bears mentioning that the complexes \(M_1\) and \(M_3\) in the above theorem may in some cases have the homotopy type of a manifold: when the integer \(d\) is even, we may take \(M_1\) to be a connected sum of \(\frac{d}{2}\)-many copies of \(S^n\times S^n\), and for \(M_3\) it depends on the total surgery obstruction of Ranicki \cite{ranickibook}. 

Our Main Theorem is by no means the first higher dimensional analogue to Wall's Theorem. Tamura gave decomposition results for closed, oriented, torsion free $(n-2)$-connected differentiable $2n$-manifolds (for certain congruence classes of \(n\) modulo 8) with vanishing \(n^{th}\) homology group \cite{tamura2} . Later, in the 70s, Ishimoto was able to expand on this, giving a partial analogue to Wall's Theorem for $(n-2)$-connected $2n$-manifolds with torsion free homology, using results about parallelisability \cite{ishimoto}. Indeed, \cite{ishimoto}*{Theorem 4} shows that a unique connected sum decomposition (up to reordering the summands) always exists for these manifolds, and a partial answer to the consequent classification problem is subsequently developed - see for example \cite{ishimoto}*{Theorem 7}. In analogue to Wall's Theorem, both \cite{tamura2} and \cite{ishimoto} detect copies of \((n-1)\)-connected \(2n\)-dimensional manifolds as summands in their connected sum decompositions. They also work hard to gain more control over the diffeomorphism type of the space analogous to Wall's \(M_2\). Further work from the last century, including but not limited to \cites{ishimoto69, fang}, continued this trend of using geometric and differential methods developed from those of Wall in order to provide higher dimensional analogues.

The methods used in this paper differ greatly from those of the past authors mentioned above. Indeed, we focus on decomposing Poincar\'e Duality complexes (of which closed, smooth, simply connected manifolds are a subclass), and our restrictions are far milder: we do not need to make assumptions about parallisability or restrict to the case when homology is torsion free. Though the price we pay is to sacrifice geometric precision by passing to based loop spaces, we still recover useful homotopy theoretic information, and demonstrate the value in considering such decomposition problems from this point of view. 

The structure of this paper is as follows. In the first section, we establish some modifications to a construction of Theriault \cite{t20}*{Section 8} in order to prove Theorem \ref{1thm:inertness}, which underpins much of what follows. In particular, this is used in Section 3 to give an important fact in Lemma \ref{lem:inertness}, and then to give further results in the context of the homotopy theory of Poincar\'e Duality complexes. We give several examples, as well as a new proof of \cite{t20}*{Theorem 9.1(b)-(c)}. The key theorem of this section is Theorem \ref{thm:PDconn}, which gives a general framework for our analysis. The next section is an aside into the skeletal structure of $(n-2)$-connected $2n$-dimensional Poincar\'e Duality complexes, which we discuss in order to prove the Main Theorem. The titular homotopy theoretic analogue is proved in Section \ref{sec:wall}, which is given by applying the methods developed throughout the preceding sections. 

The author would like to thank their PhD supervisor, Stephen Theriault, for the many illuminating discussions throughout the preparation of this paper.

\section{A Preliminary Construction}

In this section, we shall make some small changes to a construction of Theriault from \cite{t20}*{Section 8} in order to prove a slightly more general result, which forms the basis of what is to come. Let us first establish some notation. Unless otherwise stated, all spaces are assumed to be simply connected. For a homotopy cofibration \[A\xrightarrow{f}B\xrightarrow{j}C\] the map \(f\) is called \textit{inert} if \(\Omega j\) has a right homotopy inverse. Second, for a wedge of spaces \(\bigvee_{i=1}^n X_i\), let \(p_j:\bigvee_{i=1}^n X_i\rightarrow X_j\) denote the pinch map to the \(j^{th}\) summand. Every \(p_j\) has a right homotopy inverse, given by inclusion of the \(j^{th}\) wedge summand. Our focus in this section will be on analysing homotopy cofibrations of the form \[\Sigma A \xrightarrow{f} X\vee Y\xrightarrow{q} C\] and the key to our considerations will be the following homotopy commutative diagram of homotopy cofibrations \begin{equation}\label{1dgm:setup1}
    \begin{tikzcd}[row sep=1.5em, column sep = 1.5em]
        && Y \arrow[rr, equal] \arrow[dd, hook] && Y \arrow[dd] \\
        &&&&& \\
        \Sigma A \arrow[rr, "f"] \arrow[dd, equal] && X\vee Y  \arrow[rr, "q"] \arrow[dd, "p_1"] && C \arrow[dd, "\varphi"] \\
        &&&&& \\
        \Sigma A \arrow[rr, "p_1\circ f"] && X \arrow[rr, "j"] && M
    \end{tikzcd}
\end{equation}
where the bottom-right square is a homotopy pushout. Our goal is to prove the following theorem.  

\begin{thm}\label{1thm:inertness}
Consider Diagram (\ref{1dgm:setup1}). If the map \(\Omega j\) has a right homotopy inverse, then so do \(\Omega \varphi\) and \(\Omega q\). In particular, if the composite \(p_1\circ f\) is inert, then so is \(f\).
\end{thm}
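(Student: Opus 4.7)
My plan is to establish the two right-inverse statements in turn; the closing assertion that $f$ is inert whenever $p_1\circ f$ is inert then follows immediately from the definition, since $C$ is the homotopy cofibre of $f$ by the middle row of Diagram~\eqref{1dgm:setup1}.

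For the right homotopy inverse of $\Omega\varphi$, the construction is direct. Let $i_1\colon X\hookrightarrow X\vee Y$ denote the inclusion of the first wedge summand, so $p_1\circ i_1\simeq\mathrm{id}_X$, and let $s\colon\Omega M\to\Omega X$ be the given right homotopy inverse of $\Omega j$. Commutativity of the bottom-right pushout square of Diagram~\eqref{1dgm:setup1} gives $\varphi\circ q\circ i_1 = j\circ p_1\circ i_1 = j$, hence
\[
\Omega\varphi\circ(\Omega q\circ\Omega i_1\circ s)\simeq\Omega j\circ s\simeq\mathrm{id}_{\Omega M},
\]
so $\tilde s:=\Omega q\circ\Omega i_1\circ s$ is a right homotopy inverse of $\Omega\varphi$.

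Constructing a right homotopy inverse for $\Omega q$ is the main content. The plan is to assemble a map $r\colon\Omega C\to\Omega(X\vee Y)$ from two ingredients. First, the lift $\sigma:=\Omega i_1\circ s\colon\Omega M\to\Omega(X\vee Y)$ of $\tilde s$ through $\Omega q$, which handles the $\Omega M$-factor of the H-space splitting $\Omega C\simeq\Omega M\times\Omega F_\varphi$ induced by $\tilde s$ (here $F_\varphi:=\mathrm{hofib}(\varphi)$). Second, a compatible lift $\rho\colon\Omega F_\varphi\to\Omega(X\vee Y)$ of the fibre inclusion $\Omega F_\varphi\hookrightarrow\Omega C$ through $\Omega q$, which accounts for the fibre factor. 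Defining $r$ as the product $\sigma\cdot\rho$ via loop multiplication on $\Omega(X\vee Y)$ and then checking $\Omega q\circ r\simeq\mathrm{id}_{\Omega C}$ on each factor completes the construction.

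The hardest step, and the main obstacle, is producing the lift $\rho$. This is where the homotopy pushout hypothesis is used essentially. My approach is to apply a Mather cube argument to the bottom-right pushout: pulling the square back along the basepoint of $M$ yields a compatible square of homotopy fibres relating $F_\varphi$, $F_j$ and $\mathrm{hofib}(j\circ p_1)=\mathrm{hofib}(\varphi\circ q)$. Combining this with the H-space splitting $\Omega(X\vee Y)\simeq\Omega X\times\Omega F_{p_1}$ coming from the section $\Omega i_1$ of $\Omega p_1$, and with the section $s$ of $\Omega j$, one can extract a map $\Omega F_\varphi\to\Omega F_{p_1}\subset\Omega(X\vee Y)$ whose composition with $\Omega q$ agrees with the fibre inclusion; this is the required $\rho$. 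The construction is a modification of Theriault's machinery in \cite{t20}*{Section~8}, as the introduction to this section suggests.
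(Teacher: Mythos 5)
Your treatment of $\Omega\varphi$ is exactly the paper's: the section $\Omega q\circ\Omega i_1\circ s$ is the paper's $\Omega q\circ t\circ s$. Your overall plan for $\Omega q$ is also the same as the paper's: the map the paper calls $e$ at the end of the proof is precisely the loop-multiplication of a lift of the $\Omega M$-factor (the paper's $\lambda=t\circ s$, your $\sigma$) with a lift of the fibre factor (the paper's $\Omega r'$, your $\rho$). So the architecture is right; the issue is the construction of $\rho$, which you yourself flag as ``the hardest step.''

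The Mather cube argument you sketch does not supply $\rho$. Pulling the bottom-right pushout back along $\ast\to M$ gives that the square of fibres $\{E=\mathrm{hofib}(\varphi q),\,F_j,\,F_\varphi,\,\ast\}$ is a homotopy pushout, but that statement is in the wrong direction: it tells you $F_\varphi$ is a quotient of $E$ (rel $F_j$), not how to map $F_\varphi$ (or $\Omega F_\varphi$) \emph{into} anything. Nor does it produce a section of the map $\Omega F_{p_1}\to\Omega F_\varphi$ induced by $\Omega q$, which is what your factorisation through $\Omega F_{p_1}\subset\Omega(X\vee Y)$ would require. Neither the pushout property nor the section $s$ of $\Omega j$ obviously yields such a section; establishing it is essentially equivalent to the content you are trying to prove.

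What the paper actually does is harder and is not captured by your sketch. It identifies $E$ as the cofibre of a map $\theta_f\colon\Omega M\ltimes\Sigma A\to E$ with cofibre $F_\varphi$, shows via Lemma \ref{1lem:thetaleftinv} that $\theta_f$ has a left homotopy inverse \emph{independent of how $f$ behaves off $X$}, and exploits that independence to compare with the special case $C\simeq M\vee Y$, where Lemma \ref{1lem:alpha'rightinv} produces a right inverse $r$ of $\alpha'$ with $l\circ r\simeq\ast$. Feeding these into \cite{t20}*{Lemma 8.5} shows the composite $\Omega M\ltimes Y\xrightarrow{r}E\xrightarrow{\alpha}F_\varphi$ in (\ref{1equiv1}) is a homotopy equivalence. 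The required lift is then $\Omega r'$, where $r'$ is $r$ followed by the fibre inclusion $E\to X\vee Y$ — note this lands in $E$, not $F_{p_1}$, and the compatibility with $\Omega q$ is built in because $q$ carries $E$ to $F_\varphi$ via $\alpha$. This ``modify $f$ off $X$'' comparison and the resulting control on the equivalence $F_\varphi\simeq\Omega M\ltimes Y$ is the crux of the argument and is missing from your proposal; without it, the proof is not complete.
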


We will be following the method set out by Theriault in \cite{t20}*{Section 8} very closely, though with some alterations. We include much of the argument here for the sake of transparency, though we will not include sections where we argue identically to Theriault; we will instead make this clear and give precise references for where the arguments can be found. We therefore suggest that this section be read in tandem with \cite{t20}*{Section 8}.

First, recall that for two path connected and based spaces $X$ and $Y$, the \textit{(left) half-smash} of $X$ and $Y$ is the quotient space \[X\ltimes Y=(X\times Y)/(X\times y_0)\] where $y_0$ denotes the basepoint of $Y$. We begin by stating one of the main theorems of \cite{t20}.

\begin{thm}[Theriault] \label{1thm:data}
Suppose there exists a homotopy commutative diagram
\[
    \begin{tikzcd}[row sep=1.5em, column sep = 1.5em]
        && E \arrow[rr, "\alpha"] \arrow[dd] && E' \arrow[dd] \\
        &&&&& \\
        \Sigma A \arrow[rr, "f"] && B \arrow[rr] \arrow[dd, "h"] && C \arrow[dd, "h'"] \\
        &&&&& \\
        && Z \arrow[rr, equal] && Z
    \end{tikzcd}
\]
where the middle and right columns are homotopy fibrations, the map \(\alpha\) is an induced map of fibres and the middle row is a homotopy cofibration. If $\Omega h$ has a right homotopy inverse, then there exists a homotopy cofibration \[\Omega Z\ltimes \Sigma A\xrightarrow{\theta}E\rightarrow E'\] for some map \(\theta\). \hfill $\square$
\end{thm}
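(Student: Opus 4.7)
My plan is to apply Theorem \ref{1thm:data} with the cofibration $\Sigma A \xrightarrow{f} X\vee Y \xrightarrow{q} C$ as the middle row and with $Z = M$. The key is that the hypothesised splitting of $\Omega j$ can be enhanced to a splitting of $\Omega(j\circ p_1)$ by combining it with the automatic splitting coming from the wedge-pinch $p_1$.

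\textbf{Setup and hypothesis check.} I would build the diagram required by Theorem \ref{1thm:data} by taking the middle row to be $\Sigma A \xrightarrow{f} X\vee Y \xrightarrow{q} C$, the middle column to be the homotopy fibration with base map $h := j\circ p_1 : X\vee Y \to M$, and the right column to be the homotopy fibration with base map $h' := \varphi : C \to M$. Commutativity of the bottom-right square of Diagram (\ref{1dgm:setup1}) gives $\varphi\circ q = j\circ p_1 = h$, so the columns are compatible and we get an induced map of fibres $\alpha : E \to E'$. The map $p_1 : X\vee Y \to X$ has a right homotopy inverse (the inclusion of the first wedge summand), so $\Omega p_1$ has a right homotopy inverse; combined with the hypothesised right homotopy inverse of $\Omega j$, this makes $\Omega h = \Omega j\circ \Omega p_1$ admit a right homotopy inverse, verifying the hypothesis of Theorem \ref{1thm:data}.

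\textbf{Right homotopy inverse of $\Omega\varphi$.} This follows before invoking the full conclusion of Theorem \ref{1thm:data}, by a short diagram chase: since $\Omega h$ admits a right homotopy inverse, the connecting map $\partial_E : \Omega M \to E$ of the fibration $E\to X\vee Y \to M$ is null-homotopic. Naturality of the comparison of fibrations yields $\partial_{E'} = \alpha\circ \partial_E$, hence $\partial_{E'}$ is also null-homotopic, and so $\Omega\varphi$ admits a right homotopy inverse.

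\textbf{Right homotopy inverse of $\Omega q$.} Now invoke Theorem \ref{1thm:data} to obtain a homotopy cofibration $\Omega M \ltimes \Sigma A \xrightarrow{\theta} E \xrightarrow{\alpha} E'$. The two sections just constructed give identifications $\Omega(X\vee Y) \simeq \Omega E \times \Omega M$ and $\Omega C \simeq \Omega E' \times \Omega M$ in which $\Omega q$ restricts to the identity on the $\Omega M$ factor and to $\Omega\alpha$ on the fibre factor, so it suffices to produce a right homotopy inverse for $\Omega\alpha$. For this I would exploit the specific form of $\theta$ produced by Theriault's construction in \cite{t20}*{Section 8}: the map $\theta$ factors through a half-smash with $\Sigma A$ as a retract, and this structural fact combined with standard half-smash manipulations will yield the desired section of $\Omega\alpha$. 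The concluding statement about inertness is then immediate: by definition, $p_1\circ f$ inert means $\Omega j$ has a right homotopy inverse, which we have shown implies $\Omega q$ does, i.e., $f$ is inert.

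The main obstacle is the final step: going from the bare existence of the cofibration $\Omega M\ltimes \Sigma A \to E\to E'$ to a right homotopy inverse for $\Omega\alpha$. The cofibration alone does not imply such a section exists (that would say $\theta$ is itself inert), so the argument must exploit the particular construction of $\theta$ and the retract structure on $\Omega M\ltimes \Sigma A$, rather than using Theorem \ref{1thm:data} as a pure black box.
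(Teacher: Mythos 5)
The statement you were given is Theorem~\ref{1thm:data}, which the paper does not prove at all---it is Theriault's result, quoted from \cite{t20}. Your argument invokes Theorem~\ref{1thm:data} as a black box, so it cannot be a proof of it; what you have written is an attempted proof of Theorem~\ref{1thm:inertness}, and I assess it as such.

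Your setup is correct, and your derivation of a right homotopy inverse for $\Omega\varphi$ via null-homotopy of the connecting map is a valid alternative to the paper's more direct observation that $\Omega q\circ t\circ s$ does the job. The genuine gap is exactly where you flag it: producing a right homotopy inverse for $\Omega q$ (equivalently, via your splitting, for $\Omega\alpha$). You rightly observe that the bare cofibration $\Omega M\ltimes\Sigma A\to E\xrightarrow{\alpha}E'$ cannot give this and that one must open up Theriault's construction of $\theta$---but then you stop, and this is not a small final step: it is essentially all of the work. The paper needs three nontrivial ingredients here, none of which appear in your sketch. First, Lemma~\ref{1lem:thetaleftinv}: comparing Diagram~(\ref{1dgm:data2}) with the fibration over $j$ alone (where the corresponding $\theta$ is a homotopy equivalence), via the naturality in \cite{t20}*{Remark 2.7}, shows that $\theta_f$ has a left homotopy inverse $k$, and that $k$ depends only on $p_1\circ f$, not on the behaviour of $f$ away from $X$. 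Second, this independence is exploited by replacing $C$ with the special case $M\vee Y$ (same composite $p_1\circ f$, same $k$), where Lemma~\ref{1lem:alpha'rightinv} produces a right homotopy inverse $r:\Omega M\ltimes Y\to E$ of $\alpha'$ with $l\circ r\simeq *$, hence $k\circ r\simeq *$. Third, \cite{t20}*{Lemma 8.5}, an algebraic lemma about a pair of cofibrations out of the same source sharing a common left inverse, converts these facts into the conclusion that $\alpha\circ r$ is a homotopy equivalence; only then is the explicit right inverse $e$ for $\Omega q$ assembled by the loop-multiplication argument. Calling this ``standard half-smash manipulations'' and ``a structural fact'' understates what is required; without Lemma~\ref{1lem:thetaleftinv}, Lemma~\ref{1lem:alpha'rightinv} and \cite{t20}*{Lemma 8.5}, the proof does not close.
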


Note the special case in which \(C=Z\) and \(h'\) is the identiy map, which implies that \(E'\) is contractible and therefore that \(\theta\) is a homotopy equivalence. This gives the following corollary (a version of which the reader will also find in \cite{bt2}*{Proposition 3.5}, where it first appeared). Note that the need for the suspension \(\Sigma A\) is dropped.

\begin{cor}[Beben-Theriault]\label{1cor:splitfib} 
Suppose there is a homotopy cofibration \[A\xrightarrow{f}B\xrightarrow{h}C\] such that the map $\Omega h$ has a right homotopy inverse. Then there exists a homotopy fibration \[\Omega C\ltimes A\rightarrow B\xrightarrow{h}C.\] Moreover, this homotopy fibration splits after looping, so there is a homotopy equivalence $\Omega B\simeq \Omega C\times\Omega(\Omega C\ltimes A).$ \hfill $\square$
\end{cor}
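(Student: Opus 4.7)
The plan is to invoke Theorem \ref{1thm:data} in the degenerate configuration $Z=C$ and $h'=\mathrm{id}_C$. I would assemble the required cube as follows: take the middle column to be the homotopy fibration $F \to B \xrightarrow{h} C$, where $F$ denotes the homotopy fibre of $h$; take the right column to be the trivial fibration $\ast \to C \xrightarrow{=} C$, so that the fibre $E'\simeq\ast$; take the middle row to be the given cofibration $A \xrightarrow{f} B \xrightarrow{h} C$; and let $\alpha:F \to \ast$ be the canonical map. Since $\Omega h$ admits a right homotopy inverse by hypothesis, Theorem \ref{1thm:data} produces a homotopy cofibration
\[
\Omega C \ltimes A \xrightarrow{\theta} F \to \ast,
\]
which forces $\theta$ to be a homotopy equivalence. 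Transporting this identification through the middle column yields the asserted homotopy fibration $\Omega C \ltimes A \to B \xrightarrow{h} C$.

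For the splitting after looping, the plan is to use the standard principal-fibration trick. Fix a section $s:\Omega C \to \Omega B$ of $\Omega h$ and let $\iota:\Omega C \ltimes A \to B$ be the fibre inclusion obtained above. I would show that the composite
\[
\Omega(\Omega C \ltimes A) \times \Omega C \xrightarrow{\,\Omega\iota \times s\,} \Omega B \times \Omega B \xrightarrow{\,\mu\,} \Omega B,
\]
where $\mu$ is loop multiplication, is a homotopy equivalence. This fits into a morphism of fibrations over $\Omega C$: it restricts to $\Omega\iota$ on the fibre $\Omega(\Omega C \ltimes A)$, and post-composition with $\Omega h$ recovers $\Omega h \circ s \simeq \mathrm{id}_{\Omega C}$ on the base. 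A five-lemma comparison on the long exact sequences of homotopy groups then delivers the equivalence $\Omega B \simeq \Omega C \times \Omega(\Omega C \ltimes A)$.

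The principal technical point I expect to be the main obstacle is the discrepancy in the source of the cofibration between Theorem \ref{1thm:data}, which is stated with source $\Sigma A$, and the present corollary, where the source is an arbitrary space $A$. My plan to justify this is to observe that the suspension hypothesis enters Theriault's argument only in constructing $\theta$ for the nondegenerate case; here, because $E'\simeq\ast$, the conclusion amounts purely to an identification of $F$ with $\Omega C \ltimes A$. This identification can be made directly: pull the fibration $F \to B$ back along $f:A\to B$ to obtain a fibration over $A$ with fibre $F$, use the null-homotopy $h\circ f\simeq\ast$ furnished by the cofibration to trivialise the pulled-back fibration as $A \times \Omega C$, then verify that quotienting by $A\times\{\ast\}$ and comparing with $F$ yields the desired homotopy equivalence $\Omega C \ltimes A\simeq F$. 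This construction uses no suspension structure on $A$, and matches the original proof of Beben--Theriault \cite{bt2}*{Proposition 3.5}.
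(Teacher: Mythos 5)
Your proposal is correct and takes essentially the same route as the paper: the paper presents this corollary as the degenerate instance of Theorem~\ref{1thm:data} with $Z=C$ and $h'=\mathrm{id}_C$ (so that $E'\simeq\ast$ and $\theta$ is forced to be a homotopy equivalence), notes that the suspension hypothesis may be dropped in this special case, and refers to \cite{bt2}*{Proposition 3.5} where that observation originates; you do the same, and additionally sketch the \cite{bt2} argument for dropping the suspension together with the standard looped-section splitting. Two small slips for the record, neither of which damages the plan: the extended fibration sequence $\Omega C\to F\to B$ has fibre $\Omega C$, not $F$, so the pullback along $f$ is a fibration over $A$ with fibre $\Omega C$; and since $\Omega C\ltimes A=(\Omega C\times A)/(\Omega C\times a_0)$, the identification of the cofibre requires collapsing $\Omega C\times\{\ast\}$, not $A\times\{\ast\}$.
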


Returning to the situation of Diagram (\ref{1dgm:setup1}), since $p_1\circ f$ is inert, the map \(\Omega j\) has a right homotopy inverse. Let $F$ be the homotopy fibre of \(j\). Corollary \ref{1cor:splitfib} applies to the homotopy cofibration in the bottom row of (\ref{1dgm:setup1}), which implies that there is a homotopy equivalence \(F\simeq\Omega M\ltimes \Sigma A\). Equivalently, there is a homotopy cofibration \[\Omega M\ltimes\Sigma A\xrightarrow{\theta} F\rightarrow \ast.\] Now, let \(s\) denote a right homotopy inverse of \(\Omega j\), and \(t\) that of \(\Omega p_1\). Then the composite \(\Omega q\circ t\circ s\) is a right homotopy inverse for \(\Omega \varphi\). Let \(h=j\circ p_1\) and let \(E\) and \(E'\) denote the homotopy fibres of \(h\) and \(\varphi\), respectively. We have the following homotopy commutative diagram
\begin{equation}\label{1dgm:data2} 
    \begin{tikzcd}[row sep=1.5em, column sep = 1.5em]
        && E \arrow[rr, "\alpha"] \arrow[dd] && E' \arrow[dd] \\
        &&&&& \\
        \Sigma A \arrow[rr, "f"] && X\vee Y \arrow[rr, "q"] \arrow[dd, "h"] && C \arrow[dd, "\varphi"] \\
        &&&&& \\
        && M \arrow[rr, equal] && M
    \end{tikzcd}
\end{equation}
where the middle and right columns are homotopy fibrations, the map \(\alpha\) is an induced map of fibres and the middle row is a homotopy cofibration. Therefore, by Theorem \ref{1thm:data}, there exists a homotopy cofibration \begin{equation}\label{1cofib1}    
    \Omega M\ltimes\Sigma A\xrightarrow{\theta_f} E\xrightarrow{\alpha} E'. 
\end{equation}
Moreover, the right homotopy inverse for \(\Omega \varphi\) enables us to apply Corollary \ref{1cor:splitfib} to the right-most column of (\ref{1dgm:setup1}), so we have homotopy equivalences \[E'\simeq\Omega M\ltimes Y\text{\; and \;}\Omega C\simeq\Omega M\times\Omega(\Omega M\ltimes Y).\] The proof strategy for Theorem \ref{1thm:inertness} will be as follows: we wish to gain more control over the homotopy class of the first equivalence, and use this knowledge to deduce further facts about the second. The next step is to consider the homotopy fibration diagram
\begin{equation}\label{1dgm:fib1}
    \begin{tikzcd}[row sep=1.5em, column sep = 1.5em]
        E \arrow[rr, dashed, "l"] \arrow[dd] && F \arrow[dd] \\
        &&& \\
        X\vee Y \arrow[rr, "p_1"] \arrow[dd, "h"] && X \arrow[dd, "j"] \\
        &&& \\
        M \arrow[rr, equal] && M.
    \end{tikzcd}
\end{equation}
The bottom square of (\ref{1dgm:fib1}) is commutative by definition of the map \(h\), so the induced map of fibres \(l\) exists. In \cite{t20}*{Remark 2.7}, a naturality condition is given for Theorem \ref{1thm:data}, which is satisfied in by virtue of (\ref{1dgm:fib1}). Thus there is a homotopy cofibration diagram 
\begin{equation}\label{1dgm:cofib1}
    \begin{tikzcd}[row sep=1.5em, column sep = 1.5em]
        \Omega M \ltimes \Sigma A \arrow[rr, "\theta_f"] \arrow[dd, equal] && E  \arrow[rr, "\alpha"] \arrow[dd, "l"] && E' \arrow[dd] \\
        &&&&& \\
        \Omega M \ltimes \Sigma A \arrow[rr, "\theta"] && F \arrow[rr] && *.
    \end{tikzcd}
\end{equation}
Note that since \(\theta\) is a homotopy equivalence, (\ref{1dgm:cofib1}) implies that the map \(\theta_f\) always has a left homotopy inverse. Moreover, observe also that in constructing the above we only considered with the behaviour of $f$ when restricted to $X$. We record this in the lemma below, for ease of reference.

\begin{lem}\label{1lem:thetaleftinv}
With the set-up of Diagram (\ref{1dgm:cofib1}) above, the map \(\theta_f\) has a left homotopy inverse, regardless of the homotopy class of the composite \(f\) when restricted away from \(X\). \hfill $\square$
\end{lem}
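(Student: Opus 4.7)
The plan is simply to read off the conclusion from the left-hand square of Diagram (\ref{1dgm:cofib1}), using the fact that its bottom map \(\theta\) is a homotopy equivalence. To justify that, I would recall why \(\theta\) is an equivalence: Corollary \ref{1cor:splitfib}, applied to the bottom cofibration \(\Sigma A \xrightarrow{p_1\circ f} X \xrightarrow{j} M\) of Diagram (\ref{1dgm:setup1}) under the standing hypothesis that \(\Omega j\) admits a right homotopy inverse, identifies the homotopy fibre \(F\) of \(j\) with \(\Omega M \ltimes \Sigma A\), and this identification is precisely the map \(\theta\) appearing in the bottom row of (\ref{1dgm:cofib1}).

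With this in hand, the left-hand square of Diagram (\ref{1dgm:cofib1}) commutes up to homotopy with identity as its left vertical arrow, so \(l\circ \theta_f \simeq \theta\), and therefore any homotopy inverse \(\theta^{-1}\) produces a left homotopy inverse \(\theta^{-1}\circ l\) of \(\theta_f\). To justify the "regardless" clause, I would trace back through the construction of \(\theta_f\) via Theorem \ref{1thm:data} applied to Diagram (\ref{1dgm:data2}), together with the construction of \(l\) from Diagram (\ref{1dgm:fib1}): the only way \(f\) enters is through the data needed to define the bottom row of (\ref{1dgm:setup1}) and the induced map of fibres, both of which depend only on the composite \(p_1\circ f\). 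Consequently the left inverse \(\theta^{-1}\circ l\) is assembled entirely from data depending on the restriction of \(f\) that produces \(p_1\circ f\), with no input from the \(Y\)-component of \(f\).

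There is no real obstacle here; the lemma is a bookkeeping statement packaging what was already derived in the paragraphs leading up to it, so the work lies entirely in making that dependency explicit and in pointing to the homotopy equivalence \(\theta\) as the source of the left-inverse.
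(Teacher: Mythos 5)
Your proposal is correct and matches the paper's argument: the left square of Diagram (\ref{1dgm:cofib1}) gives \(l\circ\theta_f\simeq\theta\), and since \(\theta\) is the homotopy equivalence supplied by Corollary \ref{1cor:splitfib} applied to the bottom cofibration of (\ref{1dgm:setup1}), the composite \(\theta^{-1}\circ l\) is a left homotopy inverse for \(\theta_f\). As you observe, both \(l\) and \(\theta\) are built solely from \(j\), the pinch map \(p_1\), and \(p_1\circ f\), so this inverse is independent of the \(Y\)-component of \(f\).
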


This enables us to switch focus for the time being, and consider the special case in which \(C\simeq M\vee Y\). Diagram (\ref{1dgm:setup1}) now becomes the homotopy cofibration diagram
\begin{equation}\label{1dgm:cofib2}
    \begin{tikzcd}[row sep=1.5em, column sep = 1.5em]
        && Y \arrow[rr, equal] \arrow[dd, hook] && Y \arrow[dd, hook] \\ 
        &&&&& \\
        \Sigma A \arrow[rr, "f"] \arrow[dd, equal] && X\vee Y  \arrow[rr, "j\vee 1"] \arrow[dd, "p_1"] && M\vee Y \arrow[dd, "p_1"] \\
        &&&&& \\
        \Sigma A \arrow[rr, "p_1\circ f"] && X \arrow[rr, "j"] && M
    \end{tikzcd}
\end{equation}
where we have \(q=j\vee1\) and \(\varphi=p_1\). Since the map \(\Omega p_1\) has a right homotopy inverse, we may apply Corollary \ref{1cor:splitfib} to the homotopy cofibration in the right-most column of (\ref{1dgm:cofib2}), again giving a homotopy equivalence \(E'\simeq \Omega M\ltimes Y\). Thus Diagram (\ref{1dgm:data2}) becomes
\begin{equation}\label{1dgm:fib1'}
    \begin{tikzcd}[row sep=1.5em, column sep = 1.5em]
        && E \arrow[rr, "\alpha'"] \arrow[dd] && \Omega M\ltimes Y \arrow[dd] \\
        &&&&& \\
        \Sigma A \arrow[rr, "f"] && X\vee Y \arrow[rr, "j\vee 1"] \arrow[dd, "h"] && M\vee Y \arrow[dd, "p_1"] \\
        &&&&& \\
        && M \arrow[rr, equal] && M
    \end{tikzcd}
\end{equation}
and, analogously to (\ref{1cofib1}), there is a homotopy cofibration
\begin{equation*}
    \Omega M\ltimes\Sigma A\xrightarrow{\theta_f'} E\xrightarrow{\alpha'} \Omega M\ltimes Y.
\end{equation*}
Noting that the upper square of (\ref{1dgm:fib1'}) is a homotopy pullback and arguing exactly as in the proof of \cite{t20}*{Lemma 8.3} gives the following.

\begin{lem}\label{1lem:alpha'rightinv}
The map \(\alpha'\) has a right homotopy inverse \(r:\Omega M\ltimes Y\rightarrow E\) such that the composite \(l\circ r\) is null homotopic. \hfill $\square$
\end{lem}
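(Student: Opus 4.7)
The plan is to argue closely along the lines of Theriault's proof of \cite{t20}*{Lemma 8.3}, in three steps.

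First, I would justify the author's remark that the upper square of (\ref{1dgm:fib1'}) is a homotopy pullback. This is an instance of the standard cube lemma for maps of homotopy fibrations: the middle and right columns of (\ref{1dgm:fib1'}) form a map of homotopy fibrations over the common base \(M\) whose bottom comparison map is \(1_M\). Since the homotopy fibre of \(1_M\) is contractible, the induced square on fibres is forced to be a homotopy pullback.

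Second, I would use the universal property of this homotopy pullback to construct \(r\). Giving a right homotopy inverse \(r\colon\Omega M\ltimes Y\rightarrow E\) of \(\alpha'\) amounts, by the pullback property, to specifying a map \(g\colon\Omega M\ltimes Y\rightarrow X\vee Y\) together with a homotopy from \((j\vee 1)\circ g\) to the fibre inclusion \(\iota\colon\Omega M\ltimes Y\rightarrow M\vee Y\). The key observation, obtained by unpacking the explicit equivalence underlying Corollary \ref{1cor:splitfib}, is that \(\iota\) factors up to homotopy as the natural half-smash projection \(\Omega M\ltimes Y\rightarrow Y\) followed by the wedge inclusion \(Y\hookrightarrow M\vee Y\). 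I would then choose \(g\) to be this projection composed with the wedge inclusion \(Y\hookrightarrow X\vee Y\); since \(j\vee 1\) restricts to the identity on \(Y\), this choice satisfies \((j\vee 1)\circ g\simeq\iota\). The pullback property supplies an \(r\) with \(\alpha'\circ r\simeq 1\) and, in addition, whose projection to \(X\vee Y\) is homotopic to \(g\).

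Third, I would verify \(l\circ r\simeq *\). From the defining square for \(l\) in (\ref{1dgm:fib1}), post-composing \(l\circ r\) with the map \(F\rightarrow X\) yields \(p_1\circ g\). Because \(g\) factors through the wedge summand \(Y\) that \(p_1\) pinches, this composite is null homotopic. Hence \(l\circ r\) lifts through the homotopy fibre of \(F\rightarrow X\), which is \(\Omega M\), joined to \(F\) by the connecting map \(\partial\colon\Omega M\rightarrow F\) of the fibration sequence \(\Omega M\rightarrow F\rightarrow X\xrightarrow{j}M\). The running hypothesis that \(\Omega j\) has a right homotopy inverse forces \(\partial\) to be null homotopic, so \(l\circ r\simeq *\) as claimed.

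The step I expect to be the main obstacle is the factorization of the fibre inclusion \(\iota\) through \(Y\) in the second step; this requires a careful inspection of the explicit construction of the equivalence \(\Omega M\ltimes Y\simeq\mathrm{hofib}(p_1)\) underlying Corollary \ref{1cor:splitfib}. Once that factorization is in place, the rest of the argument is a diagram chase leveraging the inertness of \(j\) through the vanishing of \(\partial\).
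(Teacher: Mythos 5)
Your proposal is correct and reconstructs the argument the paper defers to in \cite{t20}*{Lemma 8.3}: use the pullback property of the upper square of (\ref{1dgm:fib1'}) to manufacture $r$, and then trace $l\circ r$ through the fibration sequence using the nullity of the connecting map $\Omega M\to F$. The factorization of the fibre inclusion through $Y$ that you flag as the main potential obstacle does hold: in the Beben--Theriault construction underlying Corollary \ref{1cor:splitfib}, the equivalence $\Omega C\ltimes A\to \mathrm{hofib}(h)$ is obtained by lifting $A\to B$ into the fibre and then acting by $\Omega C$, and since that action only modifies the path coordinate, the composite $\Omega C\ltimes A\to\mathrm{hofib}(h)\to B$ is simply the projection $\Omega C\ltimes A\to A$ followed by $A\to B$; applied to $Y\hookrightarrow M\vee Y\xrightarrow{p_1}M$ this is precisely the asserted factorization of $\iota$.
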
 

Combining this with the general situation, we have homotopy cofibrations \[\Omega M\ltimes\Sigma A\xrightarrow{\theta_f} E\xrightarrow{\alpha} E'\text{\; and \;}\Omega M\ltimes\Sigma A\xrightarrow{\theta_f'} E\xrightarrow{\alpha'} \Omega M\ltimes Y.\] By Lemma \ref{1lem:thetaleftinv} there is a left homotopy inverse \(k\) for both $\theta_f$ and $\theta_f'$. Lemma \ref{1lem:alpha'rightinv} gives a right homotopy inverse $r$ for $\alpha'$, and since \(l\circ r\simeq \ast\), Diagram (\ref{1dgm:cofib1}) implies that \(k\circ r\simeq \ast\). By \cite{t20}*{Lemma 8.5}, this implies that the composite 
\begin{equation}\label{1equiv1}
    \Omega M \ltimes Y\xrightarrow{r}E\xrightarrow{\alpha} E'
\end{equation} 
is a homotopy equivalence. This achieves our first goal of gaining more control over the homotopy equivalence \(E'\simeq\Omega M\ltimes Y\); we will use the fact that it factors through \(E\) to prove Theorem \ref{1thm:inertness}. Recall that applying Corollary \ref{1cor:splitfib} to the right-most column of Diagram (\ref{1dgm:data2}) yields a homotopy equivalence \[\Omega C\simeq \Omega M\times \Omega(\Omega M\ltimes Y).\] 

\begin{thmproof}
We have already shown that the map \(\Omega \varphi\) has a right homotopy inverse given by \(\Omega q\circ s\circ t\), due to homotopy commutativity of (\ref{1dgm:setup1}), thus all that remains to prove is that the map \(\Omega q\) also has a right homotopy inverse. We shall do this by showing that the above homotopy equivalence for \(\Omega C\) factors through \(\Omega q\), from which the existence of a right homotopy inverse for \(\Omega q\) follows immediately. Let \(\lambda=s\circ t\), which is a right homotopy inverse for the map \(\Omega h\). Taking loops on the middle and right columns of (\ref{1dgm:data2}) gives 
\begin{equation}\label{1dgm:loopsetup}
    \begin{tikzcd}[row sep=1.5em, column sep = 1.5em]
        \Omega E \arrow[rr, "\Omega \alpha"] \arrow[dd] && \Omega E' \arrow[dd] \\
        &&& \\
        \Omega(X\vee Y) \arrow[rr, "\Omega q"] \arrow[dd, "\Omega h"] && \Omega C \arrow[dd, "\Omega \varphi"] \\
        &&& \\
        \Omega M \arrow[rr, equal] && \Omega M.
    \end{tikzcd}
\end{equation}
Letting \(r'\) denote \(\Omega M\ltimes Y\xrightarrow{r}E\rightarrow X\vee Y\), we have the composite \[e:\Omega M\times \Omega(\Omega M\ltimes Y)\xrightarrow{\lambda\times\Omega r'}\Omega(X\vee Y)\times\Omega(X\vee Y)\xrightarrow{\mu}\Omega(X\vee Y)\xrightarrow{\Omega q}\Omega C\] where \(\mu\) is the loop multiplication. The homotopy commutativity of (\ref{1dgm:loopsetup}) together with the fact that \(\Omega q\) is an \(H\)-map implies that that \(e\) is a homotopy equivalence, so the proof is complete.
\end{thmproof}

\section{Inert Maps and Loop Space Decompositions}\label{sec:inert}

We wish to apply Theorem \ref{1thm:inertness} to the situation of connected sums: suppose we have two homotopy cofibrations of simply connected spaces \[\Sigma A\xrightarrow{f} B\xrightarrow{j} C\text{\; and \;}\Sigma A\xrightarrow{g} D\xrightarrow{l} E.\] Consider the composite $f+g:\Sigma A\xrightarrow{\sigma}\Sigma A\vee \Sigma A \xrightarrow{f\vee g}B\vee D$, where $\sigma$ denotes the suspension co-multiplication on $\Sigma A$. The homotopy cofibre of $f+g$ is called the \textit{generalised connected sum} of $C$ and $E$ over $\Sigma A$, written $C\#_{\Sigma A}E$. When the space $A$ is clear, we will often omit the subscript. 

Note that $p_1\circ(f+g)\simeq f$. We have the diagram below, in which each complete row and column is a homotopy cofibration, and the bottom-right square is a homotopy pushout. We label the induced map $C\#_{\Sigma A}E\rightarrow C$ by $h$.
\begin{equation}\label{dgm:setup}
    \begin{tikzcd}[row sep=1.5em, column sep = 1.5em]
        && D \arrow[rr, equal] \arrow[dd, hook] && D \arrow[dd] \\
        &&&&& \\
        \Sigma A \arrow[rr, "f+g"] \arrow[dd, equal] && B\vee D  \arrow[rr, "q"] \arrow[dd, "p_1"] && C\#_{\Sigma A}E \arrow[dd, "h"] \\
        &&&&& \\
        \Sigma A \arrow[rr, "f"] && B \arrow[rr, "j"] && C.
    \end{tikzcd}
\end{equation}
The following lemma is a stronger version of \cite{chenfib}*{Lemma 2.2}. It follows immediately from Theorem \ref{1thm:inertness} and by applying Corollary \ref{1cor:splitfib} to the rightmost column of Diagram (\ref{dgm:setup}).

\begin{lem} \label{lem:inertness}
Take the setup of Diagram (\ref{dgm:setup}). If the map \(f\) is inert, then so is \(f+g\). Moreover, there is a homotopy equivalence \(\Omega (C\#_{\Sigma A}E)\simeq\Omega C\times\Omega(\Omega C\ltimes D)\). \hfill \(\square\)
\end{lem}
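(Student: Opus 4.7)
The plan is to recognise that Diagram (\ref{dgm:setup}) is a direct instance of Diagram (\ref{1dgm:setup1}): take $X = B$, $Y = D$, let the role of $M$ be played by $C$, let the top cofibre (called $C$ in the earlier diagram) be $C \#_{\Sigma A} E$, and set $\varphi = h$. Under this dictionary the map $f$ of Diagram (\ref{1dgm:setup1}) becomes $f + g$, and the composite $p_1 \circ (f+g)$ must be identified with $f$. This last identification is a quick check from the definition $f+g = (f\vee g)\circ\sigma$: since $p_1$ collapses the second wedge summand, $p_1 \circ (f \vee g) \simeq f \vee \ast$, and composing with the comultiplication $\sigma$ folds back to $f$.

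With the diagrams identified, the inertness hypothesis on $f$ is precisely the hypothesis that $\Omega j$ has a right homotopy inverse, which is the running assumption of Theorem \ref{1thm:inertness}. Applying that theorem immediately yields right homotopy inverses for both $\Omega h$ and $\Omega q$. The latter says that $f+g$ is inert, giving the first conclusion.

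For the loop-space decomposition I would then turn to the rightmost column of Diagram (\ref{dgm:setup}), namely the homotopy cofibration
\[
D \hookrightarrow C \#_{\Sigma A} E \xrightarrow{h} C.
\]
Since $\Omega h$ has a right homotopy inverse by the previous step, the hypotheses of Corollary \ref{1cor:splitfib} are satisfied, and the corollary produces the homotopy equivalence $\Omega(C \#_{\Sigma A} E) \simeq \Omega C \times \Omega(\Omega C \ltimes D)$ asserted in the lemma.

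There is no serious obstacle: the entire content has been packed into Theorem \ref{1thm:inertness} and Corollary \ref{1cor:splitfib}. The only real task is the bookkeeping of matching Diagram (\ref{dgm:setup}) with Diagram (\ref{1dgm:setup1}) and confirming $p_1 \circ (f+g) \simeq f$, after which both conclusions drop out by a single invocation of each prior result.
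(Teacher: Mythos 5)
Your proposal is correct and follows exactly the route taken in the paper: the paper notes $p_1\circ(f+g)\simeq f$ just before the lemma and then cites Theorem \ref{1thm:inertness} together with an application of Corollary \ref{1cor:splitfib} to the rightmost column of Diagram (\ref{dgm:setup}). You have simply made the dictionary between Diagrams (\ref{dgm:setup}) and (\ref{1dgm:setup1}) explicit, which is a helpful unpacking but not a different argument.
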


\begin{rem}
Lemma \ref{lem:inertness} shows that whatever the homotopy class of the map \(g\), the map \(f+g\) inherits inertness from \(f\) regardless. 
\end{rem}

We may also consider connected sums of Poincar\'e Duality complexes. Recall that a Poincar\'e Duality complex is a finite, simply connected $CW$-complex whose cohomology ring exhibits Poincar\'e Duality. For such a complex, there exists a cell structure that has a single top-dimensional cell, and we may define the connected sum operation similarly to that of manifolds. Namely, for two $n$-dimensional Poincar\'e Duality complexes $M$ and $N$, the space $M\#N$ is formed by removing an $n$-dimensional open disc from the interior of the top-cells of $M$ and $N$ and joining the resulting complexes along their boundaries. Up to homotopy $M\# N$ coincides with the generalised connected sum $M\#_{S^{n-1}}N$.

In pursuit of our analogue to Wall's Theorem, we seek a framework whereby it may be shown a that Poincar\'e Duality complex has the homotopy type of a connected sum, after looping. To begin to give this we have the following Proposition, which is a restatement of \cite{t20}*{Theorem 9.1 (b)-(c)}, though we provide a new proof.

\begin{prop}[Theriault]\label{prop:PDconnsum}
Let $M$ and $N$ be two Poincar\'e Duality complexes of dimension $n$, where $n>3$, such that the attaching map of the top-cell of $M$ is inert. Then there is a homotopy equivalence \[\Omega (M\#N)\simeq \Omega M\times\Omega(\Omega M\ltimes N_{n-1})\] where $N_{n-1}$ denotes the $(n-1)$-skeleton of $N$. Furthermore, the attaching map of the top-cell of $M\#N$ is inert.
\end{prop}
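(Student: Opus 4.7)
The plan is to recognise that this proposition is almost an immediate consequence of Lemma \ref{lem:inertness}, once we match up the cell data of $M$ and $N$ with the hypotheses of the lemma. First I would fix cell structures: write $M\simeq M_{n-1}\cup_f D^n$ and $N\simeq N_{n-1}\cup_g D^n$, where $f,g\colon S^{n-1}\to M_{n-1},N_{n-1}$ are the attaching maps of the top cells, and where we use that every $n$-dimensional Poincar\'e Duality complex admits a CW-structure with a unique top cell. This gives two homotopy cofibrations
\[
S^{n-1}\xrightarrow{f} M_{n-1}\xrightarrow{j} M\qquad\text{and}\qquad S^{n-1}\xrightarrow{g} N_{n-1}\xrightarrow{l} N,
\]
and since $n>3$ we have $S^{n-1}=\Sigma S^{n-2}$ with $S^{n-2}$ simply connected, so the suspension structure required by the constructions of the previous section is available.

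Next I would invoke the observation (already recorded in the text preceding the proposition) that up to homotopy the topological connected sum $M\#N$ agrees with the generalised connected sum $M\#_{S^{n-1}}N$, i.e.\ there is a homotopy cofibration
\[
S^{n-1}\xrightarrow{f+g} M_{n-1}\vee N_{n-1}\xrightarrow{\,q\,} M\#N,
\]
and this $f+g$ is the attaching map of the unique top cell of $M\#N$. At this point the hypotheses of Lemma \ref{lem:inertness} are met with $B=M_{n-1}$, $D=N_{n-1}$, $C=M$, $E=N$, and $\Sigma A=S^{n-1}$, so that diagram~(\ref{dgm:setup}) applies verbatim. Since the attaching map $f$ of the top cell of $M$ is inert by hypothesis, Lemma \ref{lem:inertness} yields simultaneously that $f+g$ is inert, and hence that the attaching map of the top cell of $M\#N$ is inert, and that there is a homotopy equivalence
\[
\Omega(M\#N)\simeq \Omega M\times\Omega(\Omega M\ltimes N_{n-1}),
\]
which is exactly the desired decomposition.

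In effect the only real content is bookkeeping: matching the generalised connected sum to the topological one, and checking that "the attaching map of the top cell of $M\#N$" is literally the map $f+g$ appearing in Lemma \ref{lem:inertness}. The genuine homotopy-theoretic work has already been done in Theorem \ref{1thm:inertness} (via Theriault's construction) and in the derivation of Lemma \ref{lem:inertness}, so I do not anticipate a serious obstacle here; if anything, the only point to be careful about is that the homotopy inverse of $\Omega j$ combined with Corollary \ref{1cor:splitfib} applied to the right-hand column of diagram~(\ref{dgm:setup}) produces the product factor $\Omega(\Omega M\ltimes N_{n-1})$ with $N_{n-1}$ (the skeleton) rather than $N$ itself, which is precisely what the statement requires.
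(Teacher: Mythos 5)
Your proposal is correct and follows essentially the same route as the paper: both set up the homotopy cofibration diagram for $M\#N$ (identifying it with the generalised connected sum $M\#_{S^{n-1}}N$ with attaching map $f_M+f_N$) and then invoke Lemma~\ref{lem:inertness}. The additional remarks you make (that $S^{n-1}$ is a suspension, and that the product factor involves the skeleton $N_{n-1}$ rather than $N$) are accurate bookkeeping checks that the paper leaves implicit.
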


\begin{proof}
Let $f_M$ and $f_N$ be the attaching maps of the top-cells of $M$ and $N$, respectively, onto their $(n-1)$-skeleta $M_{n-1}$ and $N_{n-1}$. We have homotopy cofibrations \[S^{n-1}\xrightarrow{f_M}M_{n-1}\rightarrow M\text{\; and \;}S^{n-1}\xrightarrow{f_N}N_{n-1}\rightarrow N.\] Similar to Diagram (\ref{dgm:setup}), we have the following homotopy cofibration diagram.
\begin{equation*}
    \begin{tikzcd}[row sep=1.5em, column sep = 1.5em]
        && N_{n-1} \arrow[rr, equal] \arrow[dd, hook] && N_{n-1} \arrow[dd] \\
        &&&&& \\
        S^{n-1} \arrow[rr, "f_M+f_N"] \arrow[dd, equal] && M_{n-1}\vee N_{n-1} \arrow[rr, "q"] \arrow[dd, "p"] && M\#N \arrow[dd, "h"] \\
        &&&&& \\
        S^{n-1} \arrow[rr, "f_M"] && M_{n-1}  \arrow[rr, "j"] && M.
    \end{tikzcd} 
\end{equation*}
The result then follows from Lemma \ref{lem:inertness}.
\end{proof}

So far we have only used Theorem \ref{1thm:inertness} to show that a sum of attaching maps is inert. The next theorem gives conditions for an attaching map to be inert without first supposing that it is a sum. 

\begin{thm} \label{thm:PDconn}
Let $n>3$ and suppose that $M$, $N$ and $P$ are $n$-dimensional Poincar\'e Duality complexes. Let \(f_M\) and \(f_P\) denote the attaching maps of the top-cells of $M$ and $P$, respectively, and suppose further that: 
\begin{enumerate}
    \item[(i)] $M_{n-1}\simeq P_{n-1}\vee N_{n-1}$;
    \item[(ii)] the composite $p_1\circ f_M:S^{n-1}\rightarrow P_{n-1}$ is inert;
    \item[(iii)] the homotopy cofibre of $p_1\circ f_M$, \(Q\), is such that \(\Omega Q\simeq\Omega P\);
    \item[(iv)] the map \(f_P\) is inert.
\end{enumerate}
Then the attaching map $f_M$ is inert and $\Omega M\simeq \Omega(N\#P)$.
\end{thm}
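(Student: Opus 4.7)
The plan is to place the setup into the framework of Diagram (\ref{1dgm:setup1}) by taking $\Sigma A = S^{n-1}$, $X = P_{n-1}$, $Y = N_{n-1}$, and $f = f_M$, where the wedge decomposition is furnished by hypothesis (i). The space labelled $M$ in Diagram (\ref{1dgm:setup1}) then plays a different role here: it is filled by the homotopy cofibre $Q$ of $p_1 \circ f_M$, while the space labelled $C$ there is filled by our PD complex $M$, which is the cofibre of the full attaching map $f_M$.

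With this dictionary, hypothesis (ii) is exactly the hypothesis of Theorem \ref{1thm:inertness}, so that theorem applies and shows immediately that $f_M$ is inert, giving the first claim. It also produces a right homotopy inverse for $\Omega \varphi$, where $\varphi : M \to Q$ is the induced map on cofibres, i.e.\ the map that collapses $N_{n-1} \subset M$ to a point.

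The right-hand column of Diagram (\ref{1dgm:setup1}) now reads as a homotopy cofibration $N_{n-1} \hookrightarrow M \xrightarrow{\varphi} Q$. Since $\Omega \varphi$ has a right homotopy inverse, Corollary \ref{1cor:splitfib} gives
\[
\Omega M \simeq \Omega Q \times \Omega(\Omega Q \ltimes N_{n-1}).
\]
Using $\Omega Q \simeq \Omega P$ from hypothesis (iii) then yields
\[
\Omega M \simeq \Omega P \times \Omega(\Omega P \ltimes N_{n-1}).
\]
Hypothesis (iv) allows Proposition \ref{prop:PDconnsum} to be applied with $P$ in the role of the complex with inert top-cell attaching map and $N$ in the complementary role, producing exactly the same expression for $\Omega(P \# N)$. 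Combining the two identifications delivers the desired equivalence $\Omega M \simeq \Omega(N \# P)$.

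The proof is a direct translation of the three cited results, and I do not expect any deep obstacle. The main bookkeeping point to watch is that a specific equivalence realising the splitting $M_{n-1} \simeq P_{n-1} \vee N_{n-1}$ in (i) must be fixed consistently, so that the composite $p_1 \circ f_M$ named in (ii) is the same composite that appears as $p_1 \circ f$ in Diagram (\ref{1dgm:setup1}); once this identification is pinned down, the rest is purely formal manipulation of the diagrams and the previously established loop space decompositions.
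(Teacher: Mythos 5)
Your proposal is correct and follows essentially the same route as the paper: same dictionary translating the hypotheses into Diagram (\ref{1dgm:setup1}), same use of Theorem \ref{1thm:inertness} plus Corollary \ref{1cor:splitfib} and hypothesis (iii) to get $\Omega M \simeq \Omega P \times \Omega(\Omega P \ltimes N_{n-1})$, and the same comparison with the loop space of the connected sum. The only cosmetic difference is that for $\Omega(N\#P)$ you invoke Proposition \ref{prop:PDconnsum} directly, whereas the paper redraws the analogous cofibration diagram and cites Lemma \ref{lem:inertness}; since Proposition \ref{prop:PDconnsum} is itself proved from Lemma \ref{lem:inertness}, this is the same argument.
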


\begin{proof}
From \textit{(i)} we have the following homotopy cofibration diagram
\begin{equation}\label{dgm:PDconn}
    \begin{tikzcd}[row sep=3em, column sep = 3em]
        & N_{n-1} \arrow[r, equal] \arrow[d, hook] & N_{n-1}  \arrow[d] \\
        S^{n-1} \arrow[r, "f_M"] \arrow[d, equal] & P_{n-1}\vee N_{n-1} \arrow[r] \arrow[d, "p_1"] & M \arrow[d] \\
        S^{n-1} \arrow[r, "p_1\circ f_M"] & P_{n-1} \arrow[r] & Q.
    \end{tikzcd}
\end{equation}
Condition \textit{(ii)} places us in the situation of Theorem \ref{1thm:inertness}, therefore \(f_M\) is inert and there is a homotopy equivalence \(\Omega M\simeq\Omega Q\times \Omega(\Omega Q\ltimes N_{n-1})\). By condition \textit{(iii)}, we therefore have
\begin{equation}\label{eqn:generalthm}
    \Omega M\simeq\Omega P\times \Omega(\Omega P\ltimes N_{n-1}).
\end{equation}
Now consider the connected sum $N\#P$. There is a homotopy cofibration diagram
\begin{equation*}
    \begin{tikzcd}[row sep=3em, column sep = 3em]
        & N_{n-1} \arrow[r, equal] \arrow[d, hook] & N_{n-1}  \arrow[d] \\
        S^{n-1} \arrow[r, "f_N+f_P"] \arrow[d, equal] & P_{n-1}\vee N_{n-1} \arrow[r] \arrow[d, "p_1"] & N\#P \arrow[d] \\
        S^{n-1} \arrow[r, "f_P"] & P_{n-1} \arrow[r] & P.
    \end{tikzcd}
\end{equation*}
By \textit{(iv)}, Lemma \ref{lem:inertness} gives a homotopy equivalence \(\Omega(N\#P)\simeq\Omega P\times \Omega(\Omega P\ltimes N_{n-1})\) and consequently that \(\Omega M\simeq \Omega(N\#P)\), due to (\ref{eqn:generalthm}).
\end{proof}

This provides a neat corollary involving connected sums of products of spheres, which we form by drawing on results of Beben-Theriault from \cite{bt1}. 

\begin{cor} \label{cor:Sconn}
Let $n,m\geq2$ be integers. Suppose that $M$ and $N$ are two $(n+m)$-dimensional Poincar\'e Duality complexes such that there is a cohomology isomorphism $H^*(M)\cong H^*(N\#(S^n\times S^m))$ and there exists a homotopy equivalence \[M_{n+m-1}\simeq S^n\vee S^m\vee N_{n+m-1}.\] Then there is a homotopy equivalence $\Omega M\simeq \Omega(N\#(S^n\times S^m))$ and the attaching map of the top-cell of $M$ is inert.
\end{cor}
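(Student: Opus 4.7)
The plan is to verify the four hypotheses of Theorem \ref{thm:PDconn} with $P = S^n \times S^m$; note that the dimension hypothesis $n+m > 3$ is immediate from $n,m \geq 2$. Since $(S^n \times S^m)_{n+m-1} = S^n \vee S^m$, the assumed splitting of $M_{n+m-1}$ directly supplies hypothesis (i). Hypothesis (iv) asks that the attaching map $f_P = [\iota_n,\iota_m]\colon S^{n+m-1}\to S^n \vee S^m$ of the top cell of $S^n \times S^m$ be inert; this is a standard consequence of the Hilton--Milnor decomposition of $\Omega(S^n\vee S^m)$, which exhibits $\Omega(S^n\times S^m)$ as a retract, and appears explicitly in the arguments of \cite{bt1}.

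The substantive content lies in verifying (ii) and (iii). By Hilton's theorem, for $n,m\geq 2$ one has a decomposition
\[
    \pi_{n+m-1}(S^n \vee S^m) \cong \pi_{n+m-1}(S^n) \oplus \pi_{n+m-1}(S^m) \oplus \Z\langle[\iota_n,\iota_m]\rangle,
\]
since all higher basic Whitehead products have degree strictly greater than $n+m-1$. Thus the composite $p_1 \circ f_M$ admits a unique decomposition as $c\cdot[\iota_n,\iota_m] + \alpha + \beta$ with $c \in \Z$, $\alpha\in \pi_{n+m-1}(S^n)$, and $\beta\in \pi_{n+m-1}(S^m)$. The integer $c$ is precisely the coefficient detecting the cup product of the two classes dual to the $S^n$ and $S^m$ summands in the cohomology of the cofibre $Q$ of $p_1\circ f_M$. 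Because this cup product structure is inherited by $M$ (as the generators in question lift through the inclusion $S^n\vee S^m \hookrightarrow M_{n+m-1}$), and since the hypothesis $H^*(M) \cong H^*(N \# (S^n \times S^m))$ forces the corresponding product in $M$ to be a generator of the top cohomology, we conclude $c = \pm 1$.

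With $c = \pm 1$ in hand, the Beben--Theriault analysis of the loop spaces of such cofibres in \cite{bt1} supplies both ingredients simultaneously: the map $p_1 \circ f_M$ is inert (giving (ii)), and its cofibre satisfies $\Omega Q \simeq \Omega(S^n \times S^m)$ (giving (iii)). Applying Theorem \ref{thm:PDconn} then produces the homotopy equivalence $\Omega M \simeq \Omega(N \# (S^n\times S^m))$ together with inertness of the top-cell attaching map of $M$.

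The one delicate point is ensuring the Beben--Theriault framework applies in the presence of the lower-order perturbation $\alpha + \beta$, rather than only when the projection is exactly $\pm[\iota_n,\iota_m]$; the cohomological argument above reduces the problem to precisely the setting their machinery handles. Beyond this, the proof is a direct verification of hypotheses.
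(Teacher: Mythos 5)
Your proof is correct and follows essentially the same route as the paper: both arguments reduce the corollary to an application of Theorem \ref{thm:PDconn} with $P = S^n \times S^m$, pinching the attaching map to $S^n\vee S^m$ and appealing to \cite{bt1}*{Lemma 2.3} to obtain inertness of $p_1\circ f_M$ together with $\Omega Q \simeq \Omega S^n\times\Omega S^m$. The difference is one of exposition: you explicitly verify all four hypotheses of Theorem \ref{thm:PDconn} (including the standard fact that the Whitehead product $[\iota_n,\iota_m]$ is inert) and unpack via Hilton's theorem and a cup-product argument why the cohomology hypothesis on $M$ places $Q$ in the situation of \cite{bt1}*{Lemma 2.3}, whereas the paper asserts this directly. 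Your concluding remark about the ``lower-order perturbation'' $\alpha+\beta$ is really already absorbed by the formulation of \cite{bt1}*{Lemma 2.3}, whose hypothesis concerns only the cohomology ring of the cofibre, not the precise homotopy class of the attaching map — so there is no extra gap to close there.

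One small caution on your cup-product step: the phrasing ``inherited by $M$ \ldots lift through the inclusion $S^n\vee S^m\hookrightarrow M_{n+m-1}$'' is not quite the right direction, since the inclusion only sees the trivial ring $H^*(S^n\vee S^m)$. The clean argument uses the pinch-induced map $h\colon M\to Q$ from Diagram (\ref{dgm:Sconn}): $h^*$ sends the degree-$n$ and degree-$m$ generators of $H^*(Q)$ to the classes carried by the $S^n$ and $S^m$ wedge summands of $M_{n+m-1}$, is degree one on the top cohomology, and is a ring map; combined with $H^*(M)\cong H^*(N\#(S^n\times S^m))$, this forces the cup product in $Q$ to be a generator. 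The conclusion you reach is correct; only the justification needs this repair.
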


\begin{proof}
There is a homotopy commutative diagram of homotopy cofibrations
\begin{equation}\label{dgm:Sconn}
    \begin{tikzcd}[row sep=3em, column sep = 3em]
        & N_{n+m-1} \arrow[r, equal] \arrow[d, hook, "i"] & N_{n+m-1}  \arrow[d, "j\circ i"] \\
        S^{n+m-1} \arrow[r, "f_M"] \arrow[d, equal] & S^n\vee S^m\vee N_{n+m-1} \arrow[r, "j"] \arrow[d, "p"] & M \arrow[d] \\
        S^{n+m-1} \arrow[r, "p\circ f_M"] & S^n\vee S^m \arrow[r, "h"] & Q.
    \end{tikzcd}
\end{equation}
Because of the conditions we imposed in the statement of the Corollary, the complex \(Q\) precisely satisfies the situation of \cite{bt1}*{Lemma 2.3}, so there is a homotopy equivalence $\Omega Q\simeq\Omega S^n\times\Omega S^m$ and the map $\Omega h$ has a right homotopy inverse. Therefore the composite \(p\circ f_M\) is inert (by definition), so Theorem \ref{thm:PDconn} applies and there is a homotopy equivalence $\Omega M\simeq \Omega(N\#(S^n\times S^m))$ and \(f_M\) is inert.
\end{proof}

\begin{exa}
An immediate application of Corollary \ref{cor:Sconn} is that the attaching map of the top-cell of the \(r\)-fold connected sum \[\overset{r}{\underset{i=1}{\#}}(S^n\times S^n)\] is inert. We can generalise this further: fix an integer \(k>3\) and take an index set $I$ and integers $n_i,m_i\geq2$ such that $n_i+m_i=k$ for all $i\in I$. The attaching map of the top-cell of the connected sum $\underset{i\in I}{\#}(S^{n_i}\times S^{m_i})$ is then easily seen to be inert.
\end{exa}

\begin{exa} \label{exa:bt}
Using Corollary \ref{cor:Sconn} we can deduce some facts about a manifolds discussed in \cite{bt1}. Let \(n>3\) be an integer such that \(n\neq4,8\) (to avoid cases where maps of Hopf invariant one may arise) and let \(M\) be an a smooth, closed, oriented, \((n-1)\)-connected \(2n\)-dimensional manifold with \(rank(H_n(M))=d\geq2\). The manifold \(M\) is Poincar\'e Duality complex, and we are in the situation of Corollary \ref{cor:Sconn}. Consequently, all such manifolds have their top-cells attached by inert maps. Moreover, if \(n\) is an odd number, Poincar\'e Duality implies that \(d\) must be even, and so \cite{bt1}*{Theorem 1.1(b)} implies that there is a homotopy equivalence \[\Omega M\simeq\Omega\left(\overset{d/2}{\underset{i=1}{\#}}(S^n\times S^n)\right)\] and consequently an isomorphism of homotopy groups. Thus, for such a manifold \(M\), its homotopy groups are determined entirely by the rank of its middle (co)homology group \(H_n(M)\).
\end{exa}

\section{Concerning $(n-2)$-Connected $2n$-Dimensional Poincar\'e Duality Complexes}

Let us fix an integer $n>3$ such that $n\not\in\lbrace4,8\rbrace$ (again to avoid cases where maps of Hopf invariant one may arise) and consider a  smooth, closed, oriented, $(n-2)$-connected $2n$-dimensional Poincar\'e Duality complex. Take such a complex \(M\); the Universal Coefficient Theorem together with Poincar\'e Duality enables us to deduce that 
\begin{equation}\label{eqn:hmlgy}
    H_*(M)\cong\begin{cases}\Z\mbox{ if }*=0\mbox{ or }*=2n \\ \Z^{l}\oplus T\mbox{ if }*=n-1 \\\Z^{d}\oplus T\mbox{ if }*=n \\ \Z^{l}\mbox{ if }*=n+1 \\ 0\mbox{ otherwise}\end{cases}
\end{equation}
where $T\cong\bigoplus_{i=1}^k\Z/p_i^{r_i}\Z$ for primes $p_i$ and integers $r_i\in\N$. In this section we shall construct an appropriate homology decomposition of $M$, so that the homotopy theoretic methods we have developed previously can be applied. This will provide the basis for proof of the analogue to Wall's Theorem (see Theorem \ref{thm:wallanalogue}).  

To begin, recall from \cite{hatcher}*{Section 4H} (or \cite{ark}*{Section 7.3}) that for a sequence of groups $G_j$, $j\geq 1$, one may inductively construct a $CW$-complex $X$ via a sequence of subcomplexes $X_1\subset X_2\subset\dots$ with \[H_i(X_j)\cong\begin{cases} G_i\mbox{ if }i\leq j \\ 0\mbox{ if }i>j\end{cases}\] such that
\begin{itemize}
    \item[(i)] $X_1$ is a Moore space $M(G_1,1)$;
    \item[(ii)] $X_{j+1}$ is the homotopy cofibre of a cellular map $h_j:M(G_{j+1},j)\rightarrow X_j$ that induces a trivial map in homology;
    \item[(iii)] $X=\bigcup_j X_j$.
\end{itemize}
In \cite{hatcher}*{Theorem 4H.3}, it is noted that every simply connected $CW$-complex has a homology decomposition. We will follow established convention and let $P^n(p_i^{r_i})$ denote the Moore space \(M(\Z/p_i^{r_i}\Z,n-1)\), that is, the homotopy cofibre of the degree $p_i^{r_i}$ map $S^{n-1}\rightarrow S^{n-1}$.

\begin{prop} \label{prop:pdhmlgy}
Let $M$ be an $(n-2)$-connected $2n$-dimensional Poincar\'e Duality complex, as described in (\ref{eqn:hmlgy}). Then there exists an integer $c$, with $0\leq c\leq l$, such that there is a homotopy cofibration
\[S^{2n-1}\rightarrow\left(\bigvee_{i=1}^{d} S^n\right)\vee\left(\bigvee_{i=1}^{l-c} S^{n-1}\vee S^{n+1}\right)\vee J\rightarrow M\] for some appropriate $CW$-complex $J$.
\end{prop}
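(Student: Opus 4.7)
The plan is to endow $M$ with a minimal CW structure reflecting the homology (\ref{eqn:hmlgy}), peel off wedge summands from the $(2n-1)$-skeleton, and extract the cofibration from the unique top-cell attachment. More precisely, I would choose a CW decomposition with $l+k$ cells in dimension $n-1$, $d+2k$ cells in dimension $n$, and $l+k$ cells in dimension $n+1$. Among the $n$-cells there are $d$ ``free'' cells $a_i$ and $k$ ``torsion-filler'' cells $b_j$ (both with null boundary) together with $k$ cells $c_j$ whose boundary is $p_j^{r_j}$ times the $j$-th torsion generator of $C_{n-1}$; among the $(n+1)$-cells there are $l$ cells $e_i$ with null boundary and $k$ cells $f_j$ whose boundary is $p_j^{r_j}b_j$. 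Such a structure exists because $M$ is $(n-2)$-connected, by standard minimal CW approximation.

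Because $M$ is $(n-2)$-connected, the $(n-1)$-skeleton is $\bigvee^{l+k} S^{n-1}$, and by the Hurewicz isomorphism $\pi_{n-1}$ of this wedge is $\Z^{l+k}$. Hence each $n$-cell attaching map is determined up to homotopy by its homology class: the cells $a_i$ and $b_j$ attach nullhomotopically (yielding $S^n$ wedge summands), while each $c_j$ combines with the $j$-th torsion $(n-1)$-sphere to form a Moore space wedge summand $P^n(p_j^{r_j})$. This gives
\[M_n\simeq\left(\bigvee_{i=1}^{l}S^{n-1}\right)\vee\left(\bigvee_{i=1}^{d+k}S^n\right)\vee\left(\bigvee_{j=1}^{k}P^n(p_j^{r_j})\right).\]

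I next analyse the attachment of the $(n+1)$-cells. By dimension counting, under the hypotheses $n>3$ and $n\notin\{4,8\}$, no relevant Whitehead products contribute to $\pi_n$; the Hilton--Milnor theorem then gives $\pi_n(M_n)\cong(\Z/2)^{l}\oplus\Z^{d+k}\oplus\Xi$, where $\Xi=\bigoplus_{j=1}^{k}\pi_n(P^n(p_j^{r_j}))$ is $2$-torsion. The cells $f_j$ attach via their boundary $p_j^{r_j}b_j$, forming $P^{n+1}(p_j^{r_j})$ wedge summands which absorb the corresponding $b_j$-spheres. The attaching maps of the $e_i$, being null on $H_n$, lie in the $2$-torsion subgroup $(\Z/2)^{l}\oplus\Xi$, and I let $c$ be the $\Z/2$-rank of the $l$-tuple of these attaching maps.

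Applying self-equivalences of $\bigvee^{l}S^n$ realised by elements of $GL_l(\Z)$, I would perform row reduction over $\Z/2$ and thereby render $l-c$ of the $e_i$-attaching maps nullhomotopic. These $l-c$ cells split off as wedge summands $S^{n+1}$ and pair with $l-c$ of the surviving $g_i$-spheres to yield $\bigvee^{l-c}(S^{n-1}\vee S^{n+1})$. Combined with $\bigvee^{d}S^n$ from the $a_i$-cells, this produces the two explicit wedge summands of the statement, while the remaining complex $J$ comprises the $c$ non-null $e_i$-cells with their $g_i$-partners and the $2k$ Moore spaces $P^n(p_j^{r_j})$ and $P^{n+1}(p_j^{r_j})$. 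Attaching the unique top $(2n)$-cell $S^{2n-1}\to M_{2n-1}$ then completes the cofibration. The main obstacle is the rigorous verification of the $\pi_n(M_n)$ decomposition together with the realisability of the necessary row reduction, both of which ultimately rely on Hilton--Milnor and on the surjection $GL_l(\Z)\twoheadrightarrow GL_l(\Z/2)$.
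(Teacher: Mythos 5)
Your proposal takes a genuinely different route from the paper. The paper works with a \emph{homology decomposition} in the sense of \cite{hatcher}*{Section 4H}, so each $M_{j+1}$ is the cofibre of a map $M(G_{j+1},j)\to M_j$ that is trivial in homology, and the wedge summands of the $(n+1)$-skeleton are extracted purely formally via the two retract lemmas (Lemmas \ref{lem:known1} and \ref{lem:known2}), with $c$ defined as a maximum of two ad hoc integers coming from the pinch/restriction criteria on the source and target sides. You instead fix a minimal CW structure, compute $\pi_n(M_n)$ via a Hilton--Milnor--type argument, and realise row reduction over $\Z/2$ by self-equivalences of $\bigvee^l S^n$ coming from $GL_l(\Z)$. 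The paper's approach never touches a homotopy group, which is part of what makes the retract lemmas efficient; yours is more explicit about where $c$ comes from and would, if correct, pin it down as the rank of a concrete $\Z/2$-matrix in the spirit of Huang's Lemma 6.1.

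However, there is a gap. You identify $\pi_n(M_n)\cong(\Z/2)^l\oplus\Z^{d+k}\oplus\Xi$, and your row reduction controls only the $(\Z/2)^l$-component of the $e_i$-attaching maps. Rendering that component zero for $l-c$ of the $e_i$ does \emph{not} make those attaching maps nullhomotopic: they may still have nonzero image in $\Xi=\bigoplus_j\pi_n(P^n(p_j^{r_j}))$, in which case the corresponding $(n+1)$-cell is genuinely attached onto a Moore-space summand and no $S^{n+1}$ splits off. Your claim ``these $l-c$ cells split off as wedge summands $S^{n+1}$'' therefore does not follow for the $c$ you define. A further, related point: row reduction acts on the basis of $\bigvee^l S^n$ (the sources), but to guarantee that $l-c$ of the $S^{n-1}$-summands of $M_n$ survive as wedge summands of $M_{n+1}$ you also need \emph{column} operations, i.e.\ self-equivalences of the wedge $\bigvee^l S^{n-1}$ in the target, to concentrate the nonzero $(\Z/2)$-entries onto a fixed sub-block. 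The paper sidesteps both issues by defining $c$ as a maximum of two integers ($c_2$ controlling the target side via the pinch lemma, $c_3$ the source side via the restriction lemma), which simultaneously eats the $\Xi$-obstruction and the column problem; since the proposition only asserts the \emph{existence} of such a $c$, this laxer definition suffices. You could rescue your argument by redefining $c$ as the least integer such that $l-c$ of the attaching maps are genuinely nullhomotopic (so that the $\Xi$-components vanish too), or by explicitly bounding $c$ from above rather than identifying it with a matrix rank.
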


The proof of Proposition \ref{prop:pdhmlgy} will involve two elementary lemmas regarding homotopy cofibrations and wedge sums, which we record here for ease of reference.

\begin{letlem}\label{lem:known1}
Let \(A\vee B\xrightarrow{h}C\rightarrow D\) be a homotopy cofibration. If the restriction of the map \(h\) top \(A\) is null homotopic, then \(\Sigma A\) is a homotopy retract of \(D\). \hfill \(\square\)
\end{letlem}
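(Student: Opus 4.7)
The plan is to establish the stronger assertion that $D$ actually splits as a wedge $D \simeq E \vee \Sigma A$ for a suitable space $E$; the retract claim then follows immediately from the inclusion of, and projection onto, the $\Sigma A$-summand.

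Since $h|_A$ is null homotopic, the first step is to replace $h$, up to a homotopy that leaves the cofibre unchanged, by a map $h'$ whose restriction to $A$ is strictly the constant map at the basepoint. This reduction allows us to write $h' = h_B \circ p_2$, where $p_2 : A \vee B \to B$ is the pinch projection to the second summand (in the notation established earlier) and $h_B := h|_B$.

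With this in hand, I would compute the cofibre directly using the standard identification $C(A \vee B) \simeq CA \vee CB$ (cones distribute over wedges when the apex is taken as basepoint). The attaching map $h'$ sends $A \subset CA$ to the basepoint of $C$, so $CA$ contributes a free wedge summand $CA/A \simeq \Sigma A$; meanwhile $B \subset CB$ is attached to $C$ along $h_B$, contributing the cofibre $E := \mathrm{cof}(h_B : B \to C)$. Assembling these pieces yields the desired splitting $D \simeq E \vee \Sigma A$, from which the retraction is evident.

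There is no real obstacle here; the argument is a routine cofibre computation. The only subtle point is the initial passage from null homotopic to strictly null, which is handled by the homotopy invariance of homotopy cofibres (equivalently, by running the argument with a cellular approximation of the null homotopy so that the cone attachment can be evaluated pointwise).
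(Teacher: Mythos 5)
Your proof is correct, and in fact establishes the stronger conclusion that $D$ splits as a wedge $E \vee \Sigma A$. The paper leaves this lemma unproved (the statement is followed immediately by a $\square$, signalling that it is regarded as standard), so there is no in-text argument to compare against; the direct cofibre computation you give is the natural one. Two small points of hygiene. First, after homotoping $h$ to $h'$ with $h'|_A$ strictly constant, the homotopy extension over the wedge may a priori perturb $h$ on $B$, so the identity $h' = h_B \circ p_2$ with $h_B := h|_B$ is not automatic; one either sets $h_B := h'|_B$ (which is all that is needed, since $h'|_B \simeq h|_B$ and hence has the same homotopy cofibre), or one notes that a \emph{based} null homotopy of $h|_A$ exists (the spaces here are simply connected and well-pointed) and extends it by the stationary homotopy on $B$, which does give $h'|_B = h|_B$ on the nose. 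Second, the parenthetical ``cones distribute over wedges when the apex is taken as basepoint'' is a slight misstatement of the fact being used: the relevant identity is that the \emph{reduced} cone, with basepoint inherited from the space rather than placed at the apex, satisfies $C(A \vee B) \cong CA \vee CB$, and this is what allows $CA$ and $CB$ to be attached to $C$ separately in your computation.
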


\begin{letlem}\label{lem:known2}
Let \(A\xrightarrow{h}B\vee C\rightarrow D\) be a homotopy cofibration. If the composition of $h$ with the pinch map $p_1:B\vee C\rightarrow B$ is null homotopic, then $B$ is a homotopy retract of $D$. \hfill \(\square\)
\end{letlem}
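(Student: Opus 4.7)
The plan is to construct the retraction by exploiting the universal property of the homotopy cofibration. Let $j:B\vee C\to D$ denote the second map in the given cofibration sequence, and let $\iota_B:B\hookrightarrow B\vee C$ denote the wedge inclusion. I will take the candidate inclusion to be the composite $i:=j\circ\iota_B:B\to D$, and then produce a retraction $r:D\to B$ with $r\circ i\simeq\mathrm{id}_B$.

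First, observe that by hypothesis the composite $p_1\circ h:A\to B$ is null homotopic. Since $A\xrightarrow{h}B\vee C\xrightarrow{j}D$ is a homotopy cofibration, the universal property (equivalently, the cofibre sequence property that maps out of the cofibre correspond to null homotopies of the precomposition with $h$) gives an extension, up to homotopy, of the pinch map $p_1:B\vee C\to B$ across $j$. That is, there exists a map $r:D\to B$ such that $r\circ j\simeq p_1$. This $r$ will serve as the retraction.

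To check the retract condition, compute
\begin{equation*}
    r\circ i = r\circ j\circ\iota_B \simeq p_1\circ\iota_B = \mathrm{id}_B,
\end{equation*}
where the middle homotopy is the one supplied by the extension and the last equality is the standard fact that the pinch map restricted to a wedge summand is the identity on that summand. Hence $B$ is a homotopy retract of $D$ via $(i,r)$, as required.

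The argument is essentially formal, so there is no substantive obstacle to overcome; the only point worth noting is that one must work consistently up to homotopy (the extension of $p_1$ exists only up to homotopy, and one should either fix a model for the cofibration in which such extensions are strict, or track the homotopy carefully through the final computation). Either formulation yields the stated retract, completing the proof.
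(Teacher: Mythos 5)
Your proof is correct and is precisely the elementary argument the paper has in mind when it states Lemma \ref{lem:known2} without proof: since $p_1\circ h$ is null homotopic, exactness of the Barratt--Puppe sequence $[D,B]\to[B\vee C,B]\to[A,B]$ supplies $r:D\to B$ with $r\circ j\simeq p_1$, and then $r\circ j\circ\iota_B\simeq p_1\circ\iota_B=\mathrm{id}_B$ exhibits $B$ as a homotopy retract of $D$. Nothing further is needed.
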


\begin{propproof}
To give the asserted homotopy cofibration, we will construct a homology decomposition $M$. We start with \[M_1\simeq M_2\simeq\dots\simeq M_{n-2}\simeq *\] and then \[M_{n-1}\simeq \left(\bigvee_{i=1}^l S^{n-1}\right)\vee\left(\bigvee_{i=1}^k P^n(p_i^{r_i})\right).\] The next complex $M_n$ is constructed via the homotopy cofibration \[M(\Z^d\oplus T,n-1)\xrightarrow{h_{n-1}}M_{n-1}\rightarrow M_n\]
for which we shall take \[M(\Z^d\oplus T,n-1)=\left(\bigvee_{i=1}^{d} S^{n-1}\right)\vee\left(\bigvee_{i=1}^k P^n(p_i^{r_i})\right).\] Since $(h_{n-1})_*=0$, the Hurewicz Theorem implies that the restriction of $h_{n-1}$ to the wedge $\bigvee_{i=1}^{d} S^{n-1}$ is null homotopic. By Lemma \ref{lem:known1}, we therefore have that the suspension of the wedge $\bigvee_{i=1}^{d} S^{n-1}$ retracts off $M_n$. Hence we have a homotopy equivalence \[M_n\simeq\bigvee_{i=1}^{d} S^n\vee E\] where $E$ denotes the homotopy cofibre of the inclusion $\bigvee_{i=1}^{d} S^n \hookrightarrow M_n$. 

We now deduce some further information about the homotopy type of the complex $E$. Since $h_{n-1}$ induces a trivial map in homology, we may suppose that there exists an integer $c_1$, with $0\leq c_1\leq l$, such that the composite \[\left(\bigvee_{i=1}^{2d} S^{n-1}\right)\vee\left(\bigvee_{i=1}^k P^n(p_i^{r_i})\right)\xrightarrow{h_{n-1}}M_{n-1}\simeq\left(\bigvee_{i=1}^l S^{n-1}\right)\vee\left(\bigvee_{i=1}^k P^n(p_i^{r_i})\right) \xrightarrow{p} \bigvee_{i=1}^{l-c_1} S^{n-1}\] is null homotopic, where $p$ denotes a pinch map. Moreover, without loss of generality we may suppose that $c_1$ is the minimal integer with this property. Here we use Lemma \ref{lem:known2}, and thus we may write \[E\simeq \left(\bigvee_{i=1}^{l-c_1} S^{n-1}\right)\vee J_1\] for some $CW$-complex \(J_1\). The next step in our construction is to form $M_{n+1}$ via the homotopy cofibration \[M(\Z^l,n)=\bigvee_{i=1}^l S^n\xrightarrow{h_n}M_n\rightarrow M_{n+1}.\] First, note that the composite \[\bigvee_{i=1}^l S^n\xrightarrow{h_n}M_n\simeq\left(\bigvee_{i=1}^{d} S^n\right)\vee\left(\bigvee_{i=1}^{l-c_1} S^{n-1}\right)\vee J_1 \xrightarrow{p_1} \bigvee_{i=1}^{d} S^n\] is forced to be null homotopic, otherwise we would generate additional torsion in $M_{n+1}$, which is not permissible because $H_{n+1}(M)\cong\Z^l$. Therefore, again by Lemma \ref{lem:known1}, $\bigvee_{i=1}^{d}S^n$ is a homotopy retract of $M_{n+1}$. Furthermore, let $c_2\leq c_1$ be the smallest integer such that the composite \[\bigvee_{i=1}^l S^n\xrightarrow{h_n}M_n \xrightarrow{p} \bigvee_{i=1}^{l-c_2} S^n\subseteq\bigvee_{i=1}^{l-c_1} S^n\] is null homotopic. Such a \(c_2\geq0\) exists, and we may assume it is minimal without loss of generality.  

Let $c_3$ be the least integer, $0\leq c_3\leq l$, such that the restiction of $h_n$ to the sub-wedge $\bigvee_{i=1}^{l-c_3}S^n$ is null homotopic. Once again using Lemma \ref{lem:known2}, this gives rise to a homotopy equivalence \[M_{n+1}\simeq\left(\bigvee_{i=1}^{d} S^n\right)\vee\left(\bigvee_{i=1}^{l-c_2} S^{n-1}\right)\vee \left(\bigvee_{i=1}^{l-c_3}S^{n+1}\right)\vee J_2\]  for some other $CW$-complex \(J_2\). Rewriting this, letting $c=Max\lbrace c_2,c_3\rbrace$, we have \[M_{n+1}\simeq\left(\bigvee_{i=1}^{d} S^n\right)\vee\left(\bigvee_{i=1}^{l-c} S^{n-1}\vee S^{n+1}\right)\vee J\] where \(J\) arises from taking the wedge of \(J_2\) with the discarded spheres. We call the \(CW\)-complex $J$ the \textit{auxiliary complex}. Letting $f$ denote the attaching map of the top-cell, our Poincar\'e Duality complex $M$ is then given by the homotopy cofibration \[S^{2n-1}\xrightarrow{f}\left(\bigvee_{i=1}^{d} S^n\right)\vee\left(\bigvee_{i=1}^{l-c} S^{n-1}\vee S^{n+1}\right)\vee J\xrightarrow{j}M.\]
\end{propproof}

\begin{exa}
In the general case, we deliberately leave the homotopy type of the auxiliary complex mysterious, but there are circumstances in which we may deduce its homotopy type. If $H_*(M)$ is torsion free and $d=0$, we can draw upon a result of Huang \cite{huanggauge}. These conditions demand that $H_n(M)\cong0$, and therefore there are no cells of consecutive dimensions in the $CW$-structure of $M$. Thus there is a homotopy cofibration \[\bigvee_{i=1}^l S^n\xrightarrow{\psi}\bigvee_{i=1}^l S^{n-1}\rightarrow M_{n+1}\] that defines the $(n+1)$-skeleton of $M$. Further, letting $i_r$ denote the inclusion of the $r^{th}$ wedge summand, for each $r\in\lbrace1,\dots,l\rbrace$ the composite \[\psi_r:S^n\xhookrightarrow{i_r}\bigvee_{i=1}^l S^n\xrightarrow{\varphi}\bigvee_{i=1}^l S^{n-1}\] defines a homotopy class in the group $\pi_n(\bigvee_{i=1}^l S^{n-1})$. Since $n>3$, this group is isomorphic to $\bigoplus_{i=1}^l\Z/2\Z$ \cite{toda}, where each $\Z/2\Z$ summand is generated by the homotopy class of the attaching map for the $(n+1)$-cell of $\Sigma^{n-3}\C P^2$. Thus we may from an $(l\times l)$-matrix $C$ with entries in $\Z/2\Z$, where the $r^{th}$ column is the image of the homotopy class of $\psi_r$ under this group isomorphism. Huang shows that this matrix may be manipulated by row and column operations, and that these operations are homotopy invariant. Letting $c=rank(C)$, \cite{huanggauge}*{Lemma 6.1} shows that there exists a homotopy equivalence \[M_{n+1}\simeq\left(\bigvee_{i=1}^{l-c}(S^{n-1}\vee S^{n+1})\right)\vee\left(\bigvee_{i=1}^c \Sigma^{n-3}\C P^2\right).\] If $d>0$, then because of our torsion free assumption, maps between cells of consecutive dimension must be null homotopic, so we have \[M_{n+1}\simeq\left(\bigvee_{i=1}^{d}S^n\right)\vee\left(\bigvee_{i=1}^{l-c}(S^{n-1}\vee S^{n+1})\right)\vee\left(\bigvee_{i=1}^c \Sigma^{n-3}\C P^2\right).\] 
\end{exa}

Our aim in the construction of this section was to write the skeletal structure of $M$ in such a fashion as to have as many wedges of pairs of spheres retracting off it. This is key to our approach to the Main Theorem, and to sustaining the analogy with Wall's Theorem. The decomposition of Proposition \ref{prop:pdhmlgy} enables us to make the following observation about the cohomology of the complex \(M\). As \(M\) is a Poincar\'e Duality complex, it has a fundamental class, which we will denote by \(\mu_M\).

\begin{lem}\label{lem:spherecups}
Assume \(d>1\) and let $x\in H^n(M)$ be a generator induced by an \(S^n\) wedge summand in $M_{n+1}$. Then there is a class $y\in H^n(M)$, is induced a different \(S^n\) wedge summand, such that $x\cup y=\mu_M$.
\end{lem}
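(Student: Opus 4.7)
The plan is to deduce the lemma from Poincaré Duality on $M$, combined with the flexibility present in the decomposition of Proposition~\ref{prop:pdhmlgy}. First I would observe that, since the only cell of $M$ beyond its $(n+1)$-skeleton has dimension $2n>n+1$, the inclusion $M_{n+1}\hookrightarrow M$ induces an isomorphism on $H^n$. Using the wedge decomposition of $M_{n+1}$, the $S^{n-1}\vee S^{n+1}$ summands contribute nothing in degree $n$, so by comparison with $H_n(M)=\Z^d\oplus T$ the auxiliary complex $J$ must contribute precisely the torsion $T$, while the $d$ copies of $S^n$ give the free part $\Z^d$. Dually, the free part of $H^n(M)$ is a rank-$d$ subgroup with basis $x_1,\dots,x_d$ obtained as the Kronecker duals of the sphere wedge summands, and the given class $x$ equals some $x_i$.

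Next I would invoke Poincaré Duality, which asserts that the cup product pairing $H^n(M)\otimes H^n(M)\to H^{2n}(M)\cong\Z\langle\mu_M\rangle$ is unimodular on the free part; torsion classes in $H^n(M)$ pair trivially with anything, since the target $\Z$ is torsion-free. Writing $A_{jk}=x_j\cup x_k\in\Z$, the matrix $A$ is a $d\times d$ integer matrix with $\det A=\pm 1$, symmetric or skew-symmetric depending on the parity of $n$.

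The main obstacle is the final step: producing $y$ as a single sphere class $x_j$ with $j\neq i$, rather than as an arbitrary $\Z$-linear combination of sphere classes. Unimodularity alone does not force any off-diagonal entry of $A$ to equal $\pm 1$ (as the case $A=I_d$ illustrates), so we must exploit the freedom in the decomposition of Proposition~\ref{prop:pdhmlgy}. Any element $P\in GL_d(\Z)$ is realized by a self-homotopy equivalence of $\bigvee_{i=1}^d S^n$; this extends by the identity over the remaining summands to yield an alternative description of $M_{n+1}$, and correspondingly of $M$, transforming the pairing matrix by $A\mapsto PAP^{T}$. A short matrix argument shows that for $d\geq 2$ and any unimodular $A$, a suitable $P$ can be chosen making some off-diagonal $(i,j)$-entry of $PAP^{T}$ equal to $\pm 1$ (for instance, already visible in the $d=2$ model $P=\left(\begin{smallmatrix}1&0\\1&1\end{smallmatrix}\right)$ applied to $A=I_2$). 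Absorbing the sign into the orientation of $M$, the modified sphere basis furnishes the required $y=x_j$ with $x\cup y=\mu_M$.
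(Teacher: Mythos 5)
Your setup through the unimodularity of the cup-product pairing is correct and matches the paper's starting point (the paper cites Hatcher, Corollary 3.39). However, there are two genuine gaps in the final step.

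First, and most seriously, the lemma fixes the class $x$: it must remain a sphere class after any re-choice of the wedge decomposition. But the change of basis $P$ you introduce is unconstrained, so once you pass to the new sphere basis realizing $PAP^{T}$, the original $x$ is typically a nontrivial $\Z$-linear combination of the new sphere classes and no longer "induced by an $S^n$ wedge summand." To make your argument run you would need to find $P$ with, say, first row $e_1^{T}$ (so that $x$ is preserved as a basis vector) and some $(PAP^{T})_{1j}=\pm1$ with $j\geq 2$. This amounts to finding $v$ with $\sum_k A_{1k}v_k=\pm1$ and $\gcd(v_2,\dots,v_d)=1$, which is not the same "short matrix argument" you gesture at and does not follow from unimodularity alone. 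Second, you never invoke the exclusion of Hopf invariant one, which the paper uses essentially: it is what rules out $y=\pm x$. Your own illustrative example $A=I_2$ has $A_{11}=1$, i.e.\ a sphere whose top-cell attaching map has Hopf invariant one — precisely the situation excluded by the hypothesis $n\notin\{4,8\}$ — so the example is vacuous rather than a model case. The paper's proof is different in structure: it applies nonsingularity directly to produce a primitive $y$ with $x\cup y=\mu_M$, then uses a basis change to make $y$ a sphere class, and uses the Hopf invariant constraint to conclude $y\neq\pm x$. Your route via the Gram matrix is in the same spirit, but to be complete it must (i) constrain $P$ so that $x$ survives as a sphere class and (ii) feed in the Hopf invariant restriction, most visibly in the case $n$ even where the diagonal of $A$ need not vanish.
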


\begin{proof}
The given class \(x\) associated with an \(S^n\) wedge summand in \(M_{n+1}\) is a basis element in the cohomology group \(H^n(M)\). By \cite{hatcher}*{Corollary 3.39} there exists a class \(y\in H^n(M)\) that generates an infinite cyclic summand of \(H^n(M)\), such that $x\cup y=\mu_M$. Therefore, up to a change of basis of \(H^*(M)\) (i.e. up to a self equivalence of \(M_{n+1}\)) we have that the class \(y\) is also induced by an \(S^n\) wedge summand. Furthermore, because we excluded Hopf invariant one cases in the setup of this section, we have \(\pm x\neq\pm y\), so the spheres that induce the classes \(x\) and \(y\) are distinct.
\end{proof}

\section{Proving the Analogue}\label{sec:wall}

We now apply the methods we have developed to give the titular homotopy theoretic analogue. Recall from the previous section that for a smooth, closed, oriented, \((n-2)\)-connected \(2n\)-dimensional manifold Poincar\'e Duality complex \(M\), with $n>3$ such that $n\not\in\lbrace4,8\rbrace$, we have integral homology \[H_*(M)\cong\begin{cases}\Z\mbox{ if }*=0\mbox{ or }*=2n \\ \Z^{l}\oplus T\mbox{ if }*=n-1 \\\Z^{d}\oplus T\mbox{ if }*=n \\ \Z^{l}\mbox{ if }*=n+1 \\ 0\mbox{ otherwise}\end{cases}\] where $T\cong\bigoplus_{i=1}^k\Z/p_i^{r_i}\Z$ for primes $p_i$ and integers $r_i\in\N$. 

\begin{thm}\label{thm:wallanalogue}
Let $n>3$ be an integer such that $n\not\in\lbrace4,8\rbrace$, and let $M$ be a \((n-2)\)-connected \(2n\)-dimensional Poincar\'e Duality complex with \(d>1\). Then there exists a homotopy equivalence \[\Omega M\simeq\Omega(M_1\#M_2\#M_3)\] where
\begin{itemize}
    \item[(i)] \(M_1\) is an \((n-1)\)-connected \(2n\)-dimensional Poincar\'e Duality complex, with \(rank(H_n(M_1))=d\);
    \item[(ii)] \(M_2\) is a connected sum of finitely many copies of \(S^{n-1}\times S^{n+1}\) and;
    \item[(iii)] $M_3$ is a \(CW\)-complex with $H_n(M_3)$ finite.
\end{itemize}
\end{thm}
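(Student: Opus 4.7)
The plan is to use the skeletal decomposition provided by Proposition \ref{prop:pdhmlgy} and then peel off the summands \(M_1\) and \(M_2\) by two successive applications of the machinery developed in Section \ref{sec:inert}. I would begin by applying Proposition \ref{prop:pdhmlgy} to obtain the homotopy cofibration
\[S^{2n-1}\xrightarrow{f} W_1\vee W_2\vee W_3\longrightarrow M,\]
where \(W_1=\bigvee_{i=1}^d S^n\), \(W_2=\bigvee_{i=1}^{l-c}(S^{n-1}\vee S^{n+1})\) and \(W_3=J\) is the auxiliary complex. These three pieces will become, respectively, the \((2n-1)\)-skeleta of \(M_1\), \(M_2\) and \(M_3\).

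First I would split off \(M_1\) using Theorem \ref{thm:PDconn}. Define \(M_1\) to be the homotopy cofibre of the composite \(p_1\circ f\colon S^{2n-1}\to W_1\); then \(M_1\) is an \((n-1)\)-connected \(2n\)-dimensional complex with \(H^n(M_1)\cong\Z^d\). By Poincar\'e duality on \(M\), the cup-product pairing on the free subgroup of \(H^n(M)\) generated by the classes coming from \(W_1\) is unimodular, and this pairing coincides with the cup product on \(H^n(M_1)\) via the natural pinch \(M\to M_1\); hence \(M_1\) is itself an \((n-1)\)-connected \(2n\)-dimensional Poincar\'e Duality complex with \(\mathrm{rank}(H_n(M_1))=d\). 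The hypothesis \(d>1\) combined with Lemma \ref{lem:spherecups} then places \(M_1\) within the scope of the argument of Example \ref{exa:bt} (whose proof via Corollary \ref{cor:Sconn} applies to any such PD complex, not just to smooth manifolds), so the top-cell attaching map \(p_1\circ f\) of \(M_1\) is inert. Conditions (i)--(iv) of Theorem \ref{thm:PDconn} are thus all satisfied (with \(P=M_1\) and \(Q=M_1\)), producing a CW complex \(R\) with \((2n-1)\)-skeleton \(W_2\vee W_3\) such that \(\Omega M\simeq\Omega(R\# M_1)\); the extension of Theorem \ref{thm:PDconn} to a non-PD summand \(R\) is justified by the fact that its proof uses only Lemma \ref{lem:inertness}, which imposes no Poincar\'e duality hypothesis on its \(N\)-summand.

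It remains to decompose \(R\). I would iterate Lemma \ref{lem:inertness} (or Corollary \ref{cor:Sconn} when applicable), at each step peeling off a single copy of \(S^{n-1}\times S^{n+1}\) using a free pair of spheres in \(W_2\). The Whitehead product \([\iota_{n-1},\iota_{n+1}]\colon S^{2n-1}\to S^{n-1}\vee S^{n+1}\) is inert, and Poincar\'e duality on \(M\) forces the appropriate projection of the residual attaching map at each stage to split off this Whitehead product. After \(l-c\) iterations one obtains \(\Omega R\simeq\Omega(M_3\# M_2)\), where \(M_2=\#_{l-c}(S^{n-1}\times S^{n+1})\) and \(M_3\) is a CW complex whose skeleton is \(J\) and whose middle homology \(H_n(M_3)\) is the finite group \(T\). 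Combined with the previous step this gives \(\Omega M\simeq\Omega(M_1\#M_2\#M_3)\). The principal obstacle will be, at each iteration, verifying that the residual attaching map genuinely decomposes as a sum in which one summand is the Whitehead product \([\iota_{n-1},\iota_{n+1}]\); this requires tracking how the Poincar\'e duality pairing on \(M\) pairs the free classes of \(H^{n-1}(M)\) and \(H^{n+1}(M)\), and demonstrating that the torsion \(T\) is absorbed entirely into \(M_3\) rather than interfering with the splittings at earlier stages.
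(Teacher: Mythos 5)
Your proposal shares the paper's skeleton-based strategy but diverges in two places, one cosmetic and one substantive.

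The cosmetic difference: you project onto all of \(W_1=\bigvee_{i=1}^d S^n\) and must then argue that the cofibre of \(p_1\circ f\) is a genuine \((n-1)\)-connected \(2n\)-dimensional Poincar\'e Duality complex (via unimodularity of the restricted cup form) and that its top-cell attaching map is inert (via Corollary \ref{cor:Sconn}). This is defensible, but the paper takes a shortcut: it uses Lemma \ref{lem:spherecups} to isolate just a \emph{single dual pair} of \(n\)-spheres and projects onto \(S^n\vee S^n\). The resulting cofibre \(Q\) then has \(H^*(Q)\cong H^*(S^n\times S^n)\), and \cite{bt1}*{Lemma 2.3} gives both the inertness of the quotient map and \(\Omega Q\simeq\Omega(S^n\times S^n)\) for free, without any separate verification that \(Q\) is a PD complex. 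The remaining \(d-2\) copies of \(S^n\) are simply folded into the complex \(X\) alongside \(W_2\vee J\).

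The substantive gap is in your last paragraph. The iteration you describe, and the ``principal obstacle'' you flag (verifying that the residual attaching map at each stage splits off \([\iota_{n-1},\iota_{n+1}]\)), are both unnecessary and in fact avoidable; your own observation that Lemma \ref{lem:inertness} imposes no Poincar\'e duality hypothesis on the \(N\)-summand already dissolves the problem. Take \(N=M_2\#M_3\) from the start: its \((2n-1)\)-skeleton is \(W_2\vee J\), so Theorem \ref{thm:PDconn} (in your extended form) directly gives \(\Omega M\simeq\Omega\bigl((M_2\#M_3)\#M_1\bigr)\) with no residue left to decompose. Equivalently, having shown \(\Omega M\simeq\Omega M_1\times\Omega\bigl(\Omega M_1\ltimes(W_2\vee J)\bigr)\), one notes that by Proposition \ref{prop:PDconnsum} the same expression computes \(\Omega(M_1\#M_2\#M_3)\), since that computation depends only on the \((2n-1)\)-skeleton of the non-\(M_1\) part and not on the homotopy class of the top-cell attaching map. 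This is precisely the observation the paper records after its proof --- once loops are applied, the attaching map of the top cell becomes irrelevant --- and it is the observation your proposal is missing; without it, the Whitehead-product splitting you ask for at each stage would be genuinely difficult to establish and is not, as far as I can see, implied by Poincar\'e duality alone (the residual attaching map need not literally be a sum of the canonical form you want, only to have a cofibre with the right loop space).
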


\begin{proof}
We first produce a loop space decomposition for \(M_1\#M_2\#M_3\). In general, we take \(M_1\) to be as in \textit{(i)} above, but note that if \(d\) is even we may simply take \(M_1\) to be a connected sum of \(\frac{d}{2}\)-many copies of \(S^n\times S^n\). Letting \(c\) and \(J\) be as in Proposition \ref{prop:pdhmlgy}, we define \[M_2=\overset{l-c}{\underset{i=1}{\#}}(S^{n-1}\times S^{n+1})\text{ and }M_3=J\cup e^{2n}.\] Note that we neglect to denote the map by which we attach the \(2n\)-cell to \(J\), as its homotopy type is of no consequence. For brevity, let \[W_1=\bigvee_{i=1}^{d} S^n\text{, }W_2=\bigvee_{i=1}^{l-c}(S^{n-1}\vee S^{n+1})\text{\; and \;}X=\left(\bigvee_{i=1}^{d-2}S^n\right)\vee W_2\vee J\] so by construction \((M_1)_{2n-1}\simeq W_1\) and \((M_2)_{2n-1}\simeq W_2\). By Lemma \ref{lem:spherecups} we can always isolate two \(n\)-spheres in \(W_1\) that are associated with classes that cup together to give the fundamental class, thus giving rise to the homotopy cofibration diagram
\begin{equation}\label{dgm:wall1}
    \begin{tikzcd}[row sep=3em, column sep=3em]
        & X \arrow[r, equal] \arrow[d, hook, "i"] & X \arrow[d] \\
        S^{2n-1} \arrow[r] \arrow[d, equal] & W_1\vee W_2 \vee J\arrow[r] \arrow[d, "p"] & M_1\#M_2\#M_3 \arrow[d, "h"] \\
        S^{2n-1} \arrow[r] & S^n\vee S^n \arrow[r, "q"] & Q
    \end{tikzcd}
\end{equation}
where the space \(Q\) has the property that \(H^*(Q)\cong H^*(S^n\times S^n)\). By \cite{bt1}*{Lemma 2.3}, the map \(\Omega q\) has a right homotopy inverse and \(\Omega Q\simeq \Omega (S^n\times S^n)\). By homotopy commutativity of (\ref{dgm:wall1}) the map \(\Omega h\) therefore has a right homotopy inverse, and applying Corollary \ref{1cor:splitfib} to the right-hand column gives the loop space decomposition \[\Omega(M_1\#M_2\#M_3)\simeq \Omega(S^n\times S^n)\times\Omega(\Omega(S^n\times S^n)\ltimes X).\] 

Now consider the Poincar\'e Duality complex \(M\). Similarly to above, letting $f$ denote the attaching map of the top-cell of $M$, by Proposition \ref{prop:pdhmlgy} and Lemma \ref{lem:spherecups} we have the following diagram of homotopy cofibrations
\begin{equation}\label{dgm:wall2}
    \begin{tikzcd}[row sep=3em, column sep=3em]
        & X \arrow[r, equal] \arrow[d, hook, "i"] & X \arrow[d, "j\circ i"] \\
        S^{2n-1} \arrow[r, "f"] \arrow[d, equal] & W_1\vee W_2 \vee J\arrow[r, "j"] \arrow[d, "p"] & M \arrow[d, "h"] \\
        S^{2n-1} \arrow[r, "p\circ f"] & S^n\vee S^n \arrow[r, "q"] & Q'
    \end{tikzcd}
\end{equation}
where, again, \(Q'\) is such that \(H^*(Q)\cong H^*(S^n\times S^n)\). Reasoning identically to before gives the loop space decomposition \[\Omega M\simeq \Omega(S^n\times S^n)\times\Omega(\Omega(S^n\times S^n)\ltimes X).\] Comparing this to the decomposition for \(\Omega(M_1\# M_2\# M_3)\) gives us the desired homotopy equivalence. 
\end{proof}

The proof of Theorem \ref{thm:wallanalogue} shows that after we take loop spaces, the homotopy class of the attaching map of the top-cell ceases to be important. Indeed, the principal object of concern in the proof is the $(2n-1)$-skeleton of the complex $M$, and the connected sum $M_1\#M_2\#M_3$ is constructed so that its $(2n-1)$-skeleton exactly matches that of $M$. Observe also that Theorem \ref{thm:wallanalogue} also gives inertness of the attaching map of the top-cell of $M$, by application of Proposition \ref{prop:PDconnsum} and Example \ref{exa:bt}.

\begin{rem}
Proving Theorem \ref{thm:wallanalogue} for Poincar\'e Duality complexes, as we have done, implies we have such a composition in the case when $M$ is in fact an smooth, closed, oriented, $(n-2)$-connected $2n$-manifold. In that case, the complex $M_3$ may also have the homotopy type of a manifold - this depends on whether the \textit{total surgery obstruction} of Ranicki (see \cite{ranickibook}*{Theorem 17.4}) is zero. It would be interesting to make further investigation here.
\end{rem}

Theorem \ref{thm:wallanalogue} enables us to make a further observation regarding rational hyperbolicity. Indeed, recall that a simply connected space $Y$ is called \textit{rationally elliptic} if $dim(\pi_*(Y)\otimes\Q)<\infty$, and called\textit{ rationally hyperbolic} otherwise \cite{fht}. For example, any wedge of spheres \(\bigvee_{i=1}^r S^{m_i}\) with \(r>1\) and all \(m_i>1\) is a rationally hyperbolic space.

\begin{cor}\label{cor:hyp}
Let $n>3$ be an integer such that $n\not\in\lbrace4,8\rbrace$, and let $M$ be a \((n-2)\)-connected \(2n\)-dimensional Poincar\'e Duality complex with \(d>1\). Then \(M\) is rationally hyperbolic if and only if \(d>2\) or \(H_{n-1}(M)\not\cong0\).
\end{cor}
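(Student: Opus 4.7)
The plan is to exploit the loop space decomposition obtained en route to Theorem~\ref{thm:wallanalogue}, namely
\[\Omega M\;\simeq\;\Omega(S^n\times S^n)\times \Omega\bigl(\Omega(S^n\times S^n)\ltimes X\bigr),\]
where $X=\bigl(\bigvee_{i=1}^{d-2}S^n\bigr)\vee W_2\vee J$ with $W_2=\bigvee_{i=1}^{l-c}(S^{n-1}\vee S^{n+1})$ and $J$ the auxiliary complex of Proposition~\ref{prop:pdhmlgy}. Since $S^n\times S^n$ is rationally elliptic, the first factor contributes only finitely many rational homotopy groups to $\Omega M$. Therefore $M$ is rationally hyperbolic precisely when $\Omega\bigl(\Omega(S^n\times S^n)\ltimes X\bigr)$ carries infinite-dimensional rational homotopy, which I will characterise by a Betti-number count on $M_{2n-1}\simeq W_1\vee W_2\vee J$: the reduced rational cohomology of $X$ has ranks $d-2$, $l$, and $l$ in degrees $n$, $n-1$, and $n+1$ respectively.

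For the forward direction, assume $d>2$ or $H_{n-1}(M)$ has positive rank, so that $\tilde H^*(X;\Q)$ is nonzero. Because $X$ is simply connected and rationally $(n-2)$-connected, the rational Hurewicz theorem yields a map $S^k\to X$ for some $k\in\{n-1,n,n+1\}$ that rationally exhibits $S^k$ as a retract of $X$. Functoriality of the half-smash then produces a rational retraction from $\Omega(S^n\times S^n)\ltimes X$ onto $\Omega(S^n\times S^n)\ltimes S^k$. I would then invoke the standard equivalence
\[A\ltimes S^k\;\simeq\;S^k\vee\Sigma^k A,\]
which follows because the quotient map $A\ltimes S^k\to S^k$ splits the inclusion $S^k\hookrightarrow A\ltimes S^k$ and the resulting cofibre is $\Sigma^k A$. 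This identifies $\Omega(S^n\times S^n)\ltimes S^k$ with $S^k\vee\Sigma^k\Omega(S^n\times S^n)$. Looping and applying the Hilton--Milnor theorem decomposes the result into a weak product indexed by basic products in a free graded Lie algebra on two generators; since $H^*(\Omega(S^n\times S^n);\Q)$ is already infinite-dimensional, infinitely many of these basic-product factors contribute nontrivially rationally, forcing $\pi_*(\Omega M)\otimes\Q$ to be infinite-dimensional.

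For the converse, suppose $d=2$ and $H_{n-1}(M)$ has rank zero. Then $\bigvee_{i=1}^{d-2}S^n=\ast$ and $W_2=\ast$, so $X\simeq J$, and the Betti number count forces $\tilde H^*(J;\Q)=0$, making $J$ rationally contractible. Hence $\Omega(S^n\times S^n)\ltimes J\simeq_{\Q}\ast$, so $\Omega M\simeq_{\Q}\Omega(S^n\times S^n)$ has finite-dimensional rational homotopy and $M$ is rationally elliptic. The main technical obstacle is the Hilton--Milnor step of the forward direction: one must verify that the infinite-dimensional rational cohomology of $\Omega(S^n\times S^n)$ really does feed into genuinely distinct rational Samelson products, rather than collapsing under cancellation, and this is exactly where having isolated a sphere summand $S^k$ of $X$ pays off.
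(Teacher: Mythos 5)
Your overall strategy is the same as the paper's (exploit the loop space decomposition $\Omega M\simeq\Omega(S^n\times S^n)\times\Omega\bigl(\Omega(S^n\times S^n)\ltimes X\bigr)$ and read off rational (hyper/ellip)ticity from the rational homotopy type of $X$), and your converse direction is essentially identical. The forward direction, however, takes a different route from the paper, and the route as written has a gap at its first step. You claim that the rational Hurewicz theorem produces a map $S^k\to X$ which \emph{rationally exhibits $S^k$ as a retract} of $X$. Rational Hurewicz only supplies a map $S^k\to X$ that is nonzero on $H_k(-;\Q)$; it does not by itself give a rational splitting $X\to S^k$. To get the retraction you need to know that $X$ is rationally a wedge of spheres. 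The paper supplies exactly this input by observing that $X$ is an $(n-2)$-connected CW-complex of dimension $n+1$ with $n\geq5$, so Ganea's theorem makes $X$ a suspension $\Sigma X'$, and a simply connected suspension is rationally a wedge of spheres. With that in hand, a sphere does retract off $X$ rationally and the rest of your argument (functoriality of $\ltimes$, the identity $A\ltimes S^k\simeq S^k\vee\Sigma^k A$, and then Hilton--Milnor applied to the looped wedge) goes through; but without the suspension step the retraction is unjustified.

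On comparison of routes once the gap is patched: the paper stays with the whole of $X$, using $\Omega(S^n\times S^n)\ltimes\Sigma X'\simeq\Sigma X'\vee\Sigma\bigl(\Omega(S^n\times S^n)\wedge X'\bigr)$ and the fact that a rational suspension is a wedge of spheres, so $\Omega(S^n\times S^n)\ltimes X$ is rationally a wedge of at least two spheres and one concludes immediately. You instead isolate a single sphere summand $S^k$ of $X$, reduce to the explicit model $S^k\vee\Sigma^k\Omega(S^n\times S^n)$, and then argue via Hilton--Milnor. Both land in the same place; the paper's version is shorter because it avoids Hilton--Milnor entirely, whereas yours pins down a concrete wedge on which to run the argument. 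A small caution about your phrasing: $\Omega(S^k\vee\Sigma^k\Omega(S^n\times S^n))$ is not the loops on a free Lie algebra on \emph{two} generators, since $\Sigma^k\Omega(S^n\times S^n)$ is rationally a wedge of infinitely many spheres; what you really want is just that there are at least two sphere summands, so the wedge is rationally hyperbolic.

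Finally, note that your hypothesis for the forward direction (``$H_{n-1}(M)$ has positive rank'') is stronger than the statement's ``$H_{n-1}(M)\not\cong0$'', which also allows pure torsion. Your Betti-number framing forces the stronger condition, since if $H_{n-1}(M)$ is pure torsion (and $d=2$) then $\tilde H^*(X;\Q)=0$ and your argument yields ellipticity rather than hyperbolicity. If you wish to match the stated corollary you either need a separate argument for the torsion case or to flag the discrepancy; as written, your proof only establishes the result under the rank hypothesis.
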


\begin{proof}
Recall from the proof of Theorem \ref{thm:wallanalogue} that we had \[\Omega M\simeq\Omega(S^n\times S^n)\times\Omega(\Omega(S^n\times S^n)\ltimes X)\] where \(X=\left(\bigvee_{i=1}^{d-2}S^n\right)\vee W_2\vee J\). By assuming \(d>2\) or \(H_{n-1}(M)\not\cong0\), the construction of Proposition \ref{prop:pdhmlgy} guarantees that \(X\) does not have the homotopy type of a point. More than this, as \(X\) is an \((n-2)\)-connected \((n+1)\)-dimensional \(CW\)-complex, since our restrictions on \(n\) give \(n\geq5\), we are able to invoke \cite{ganeacogroups} and show that \(X\) in fact has the homotopy type of suspension. Let us write \(X\simeq\Sigma X'\). 

Thus we have \(\Omega(S^n\times S^n)\ltimes \Sigma X'\simeq\Sigma(\Omega(S^n\times S^n)\wedge X')\vee X')\). Rationally, a suspension is homotopy equivalent to a wedge of spheres, so there is a rational homotopy equivalence \[\Omega(S^n\times S^n)\ltimes \Sigma X'\simeq \bigvee_{i=1}^r S^{m_i}\] for some integers \(m_i>1\) and \(r>1\). Therefore, rationally, \(\Omega(\bigvee_{i=1}^r S^{m_i})\) retracts off \(\Omega M\), and \(M\) is consequently rationally hyperbolic.

We prove the other direction by negation: if \(d=2\) and \(H_{n-1}(M)\cong0\), Theorem \ref{thm:wallanalogue} implies that \(M\) has the loop space homotopy type of \(\Omega(S^n\times S^n)\), and so \[\pi_*(M)\cong\pi_*(S^n)\times\pi_*(S^n).\] The complex \(M\) is therefore rationally elliptic, and in particular, not rationally hyperbolic. 
\end{proof}

\bibliographystyle{amsplain}
\bibliography{bib}

\end{document}